\setlist{nosep}
\setlist[enumerate]{label=\emph{\alph*)}}
\theoremstyle{plain}
\newtheorem{thm}{Théorème}[section]
\newtheorem{prop}[thm]{Proposition}
\newtheorem{lemme}[thm]{Lemme}
\newtheorem{coro}[thm]{Corollaire}
\theoremstyle{remark}
\newtheorem{rem}[thm]{Remarque}
\newtheorem{exs}[thm]{Exemples}
\theoremstyle{definition}
\newtheorem{defi}[thm]{Définition}
\newtheorem{paragr}[thm]{} 
\newtheorem*{notations}{Notations et terminologie}
\let\ndef\emph
\let\forlang\emph
\let\nbd\nobreakdash
\def\xpoint{\futurelet\@let@token\@xpoint}
\def\@xpoint{%
  \ifx\@let@token.\else
    .%
  \fi
  \xspace}
\newcommand\opcit{\forlang{op. cit}\xpoint}
\newcommand\zbox[1]{\makebox[0pt][l]{#1}}
\newcommand\pbox[1]{\zbox{\quad#1}}
\newcommand\C{\mathcal{C}}
\newcommand\D{\mathcal{D}}
\newcommand\clC{\mathsf{C}}
\newcommand\W{\mathcal{W}}
\newcommand\M{\mathcal{M}}
\newcommand\N{\mathcal{N}}
\newcommand\Mono{\mathsf{Mono}}
\DeclareMathOperator{\Hom}{\mathsf{Hom}}
\DeclareMathOperator{\Homi}{\underline{\mathsf{Hom}}}
\newcommand{\Ho}{{\operatorname{\mathrm{Ho}}}}
\newcommand\trancheColax[3]{{#1}/\negmedspace/_{\negmedspace {\rm{c}}}^{#2}{#3}}
\newcommand\Wdoo{\W_\infty}
\newcommand\WDoo{\mathsf{W}_\infty}
\newcommand\pref[1]{\widehat{#1}}
\newcommand{\Sd}{Sd\kern 1pt}
\newcommand{\Ex}{Ex}
\newcommand{\id}[1]{1^{}_{#1}}
\let\bord\partial
\let\hookto\hookrightarrow
\newcommand\Cof{\mathsf{Cof}}
\newcommand\Fib{\mathsf{Fib}}
\newcommand\jcite{\cite{Chiche2LocFond}\xspace}
\newcommand\Cat{{\mathcal{C}\mspace{-2.mu}\it{at}}}
\newcommand\tCat{\texorpdfstring{\Cat}{Cat}}
\newcommand\dCat{{2\hbox{\protect\nbd-}\kern1pt\Cat}}
\newcommand\tdCat{\texorpdfstring{\dCat}{2-Cat}}
\newcommand\Top{{\mathcal{T}\mspace{-2.mu}\it{op}}}
\newcommand\dN{N_2}
\newcommand\NerfLaxNor{N_{\rm{l,n}}}
\let\carre\square
\newcommand\cocoin{\mathchoice{\mathord{\vrule height 6.29pt depth 0pt
width .34pt \vrule height 6.29pt depth -5.95pt width
5.95pt\,}}{\mathord{\vrule height 6.29pt depth 0pt width .34pt \vrule height
6.29pt depth -5.95pt width 5.95pt\,}}{\mathord{\hskip 1pt\vrule height 4.5pt
depth 0pt width .3pt \vrule height 4.5pt depth -4.2pt width
4pt\,}}{\mathord{\vrule height 2.9pt depth 0pt width .25pt \vrule height
2.9pt depth -2.65pt width 2.65pt\hskip .5pt}}}
\title[Structures de catégories de modèles à la Thomason sur
$\dCat$]{Structures de catégorie de modèles à la Thomason sur la catégorie
des\\ $2$-catégories strictes}
\author{Dimitri Ara}
\address{Radboud Universiteit Nijmegen\\
Institute for Mathematics, Astrophysics and Particle Physics\\
Heyendaalseweg 135\\
6525 AJ Nijmegen\\
The Netherlands}
\email{d.ara@math.ru.nl}
\urladdr{http://www.math.ru.nl/\raise -3.3pt\vbox{\hbox{$\widetilde{ \ }\,$}}dara/}
\date{}
\begin{document}

\frontmatter

\begin{abstract}
Dans son article \emph{Théories homotopiques des 2-catégories}, Jonathan
Chiche étudie les théories homotopiques sur $\dCat$, la catégorie des
petites $2$\nbd-catégories strictes, données par des classes d'équivalences
faibles qu'il appelle localisateurs fondamentaux de $\dCat$. Ces
localisateurs fondamentaux de $\dCat$ sont une généralisation
$2$-catégorique de la notion de localisateur fondamental dégagée par
Grothendieck dans \emph{Pursuing stacks}.  Dans ce texte, nous déduisons des
résultats de Jonathan Chiche et de résultats que nous avons obtenus en
collaboration avec Georges Maltsiniotis l'existence, pour essentiellement
tout localisateur fondamental~$\W$ de $\dCat$, d'une structure de catégorie
de modèles à la \hbox{Thomason} sur~$\dCat$ dont les équivalences
faibles sont les éléments de $\W$. Nous démontrons que les structures de
catégorie de modèles ainsi obtenues modélisent exactement les localisations
de Bousfield à gauche combinatoires de la théorie de l'homotopie classique
des ensembles simpliciaux.
\end{abstract}

\begin{altabstract}In his \hbox{paper} \emph{Théories homotopiques des
2-catégories}, Jonathan Chiche studies homotopy theories on $\dCat$, the
category of small strict $2$-categories, given by classes of weak
equivalences which he calls basic localizers of $\dCat$. These basic
localizers of~$\dCat$ are a $2$-categorical generalization of the notion of
a basic localizer introduced by Grothendieck in \emph{Pursuing stacks}. In
this paper, we deduce from the results of Jonathan Chiche and results we
have obtained with Georges Maltsiniotis that for essentially every basic
localizer $\W$ of $\dCat$, there exists a model category structure à la
Thomason on $\dCat$ whose weak equivalences are given by $\W$. We show that
these model category structures model exactly combinatorial left Bousfield
localization of the classical homotopy theory of simplicial sets.
\end{altabstract}

\maketitle

\tableofcontents

\mainmatter

\section*{Introduction}

Ce texte a été initialement écrit comme un appendice à l'article
\emph{Théories homotopiques des $2$-catégories} \jcite de
Jonathan Chiche. Sur une suggestion du rapporteur, il a été promu en un
article indépendant. Ainsi, même si notre texte se veut auto-contenu, nous
encourageons le lecteur à lire \opcit, et notamment son introduction, avant
le présent article.

\medskip

Rappelons le contexte dans lequel se place \jcite.
La topologie algébrique moderne tend à remplacer les espaces topologiques
par les objets plus combinatoires que sont les ensembles simpliciaux. Dans
\emph{Pursuing stacks} \cite{GrothPS}, Grothendieck propose d'aller plus loin
et de fonder la théorie de l'homotopie sur la notion de petite catégorie. Il
s'agit en quelque sorte de remonter d'un cran supplémentaire dans la chaîne
de foncteurs
\[ \Cat \xrightarrow{N} \pref{\Delta} \xrightarrow{|\ |} \Top, \]
où $N$ est le foncteur nerf des petites catégories vers les ensembles
simpliciaux et $|\ |$~est le foncteur de réalisation topologique. Cela est
licite en vertu d'un résultat de Quillen : si on note $\W^1_\infty$ la
classe des foncteurs dont le nerf est une équivalence d'homotopie faible
simpliciale, alors le foncteur nerf induit une équivalence de catégories
\[
   \Ho(\Cat) \to \Ho(\pref{\Delta})
\]
entre la catégorie $\Cat$ localisée en $\W^1_\infty$ et la catégorie
homotopique usuelle des ensembles simpliciaux. Grothendieck étudie donc
$\Cat$ munie de la classe $\W^1_\infty$. Il se rend compte que les résultats
qu'il obtient ne dépendent que de quelques propriétés de la classe
$\W^1_\infty$. Il appelle \emph{localisateur fondamental} toute classe de
foncteurs qui vérifie ces propriétés et continue son étude de la théorie de
l'homotopie de $\Cat$ dans ce cadre axiomatique. Il conjecture que
$\W^1_\infty$ est le plus petit localisateur fondamental. Cette conjecture
est démontrée par Cisinski dans \cite{CisinskiLFM}. La théorie de
l'homotopie de Grothendieck est exposée dans \cite{Maltsi}.

Dans \jcite (et dans sa thèse \cite{ChicheThese}), Jonathan
Chiche pose les premières bases d'une théorie de l'homotopie à la
Grothendieck de $\dCat$, la catégorie des petites $2$-catégories strictes.
Notons $\W^2_\infty$ la classe des $2$-foncteurs envoyés sur une équivalence
d'homotopie faible simpliciale par n'importe quel foncteur nerf raisonnable,
disons le nerf géométrique $N_2 : \dCat \to \pref{\Delta}$ pour fixer les
idées. Jonathan Chiche montre dans~\jcite (le résultat apparaît
en fait déjà sous une forme moins générale dans \cite{ChicheThmA}) que le
foncteur~$N_2$ induit une équivalence de catégories
 \[ \Ho(\dCat) \to \Ho(\pref{\Delta}),  \]
où $\Ho(\dCat)$ désigne la catégorie $\dCat$ localisée en $\W^2_\infty$.
Il définit par ailleurs une notion de localisateur fondamental de $\dCat$,
analogue $2$\nbd-catégorique de la notion de localisateur fondamental de Grothendieck.
Il exhibe une bijection entre les localisateurs fondamentaux de $\Cat$ et de
$\dCat$ compatible à la localisation. Il utilise cette bijection et le résultat
de minimalité de Cisinski pour montrer que $\W^2_\infty$ est le localisateur
fondamental de $\dCat$ minimal.

\medbreak

Dans une direction complémentaire à l'approche de Grothendieck, Thomason a
démontré dans \cite{Thomason} l'existence d'une structure de catégorie de
modèles sur $\Cat$ dont les équivalences faibles sont les éléments de
$\W^1_\infty$. Il résulte du théorème de Quillen cité plus haut que cette catégorie
de modèles est équivalente, au sens de Quillen, avec la structure de catégorie
de modèles classique sur les ensembles simpliciaux.

La synthèse de ces travaux de Grothendieck et de Thomason a été effectué par
\hbox{Cisinski} dans son livre \cite{Cisinski}. Celui-ci démontre que pour tout
localisateur fondamental~$\W$ de $\Cat$ satisfaisant à une hypothèse
ensembliste anodine, il existe une structure de catégorie de modèles à la
Thomason sur $\Cat$ dont les équivalences faibles sont les éléments de $\W$. Il
démontre de plus que les structures de catégorie de modèles ainsi obtenues sur
$\Cat$ modélisent exactement les localisations de Bousfield à gauche
combinatoires de la structure de catégorie de modèles classique sur les
ensembles simpliciaux.

La généralisation $2$-catégorique du théorème de Thomason a été obtenue par
l'auteur de ce texte et Georges Maltsiniotis dans \cite{AraMaltsi}. Plus
précisément, nous y démontrons l'existence d'une structure de catégorie de
modèles à la Thomason sur $\dCat$ dont les équivalences faibles sont les
éléments de $\W^2_\infty$. De plus, nous déduisons d'un résultat de
Jonathan Chiche déjà cité que cette structure est équivalente, au sens de
Quillen, avec la structure de catégorie de modèles classique sur les
ensembles simpliciaux. Il est à noter que le texte antérieur \cite{WHPT} traite
également la question d'une généralisation $2$-catégorique du théorème de Thomason
mais qu'il contient de sérieuses erreurs (voir l'introduction de
\cite{AraMaltsi} pour plus de détails).

\medbreak

Le but du présent texte est de démontrer l'analogue $2$-catégorique du
théorème de Cisinski sur les structures à la Thomason, généralisant ainsi
les résultats de \cite{AraMaltsi} sur la structure à la Thomason
$2$-catégorique à un localisateur fondamental de $\dCat$ essentiellement
quelconque. Plus précisément, nous montrons l'existence, pour tout
localisateur fondamental $\W$ de $\dCat$ satisfaisant à une hypothèse
ensembliste anodine, d'une structure de catégorie de modèles à la Thomason
sur~$\dCat$ dont les équivalences faibles sont les éléments de $\W$. On
obtient ainsi une famille de structures de catégorie de modèles à la
\hbox{Thomason} sur~$\dCat$ modélisant exactement les localisations de
Bousfield à gauche combinatoires de la structure de catégorie de modèles
classique sur les ensembles simpliciaux.  Nous donnons par ailleurs des
conditions sur un localisateur fondamental de $\dCat$ pour que la structure
à la Thomason associée, qui est toujours propre à gauche, soit propre à
droite.

\medskip

Les ingrédients utilisés dans cet article sont de trois types. En plus des
résultats de~\cite{AraMaltsi}, et en particulier l'existence d'une structure
de catégorie de modèles à la Thomason $2$-catégorique pour $\W^2_\infty$,
les résultats présentés ici dépendent de manière cruciale de la minimalité du
localisateur fondamental $\W^2_\infty$ de $\dCat$, obtenue
dans~\jcite (théorème~6.37).  Cette minimalité résulte du résultat analogue
pour les localisateurs fondamentaux de $\Cat$, démontré par Cisinski dans
\cite{CisinskiLFM}, et d'une bijection entre les localisateurs fondamentaux
de~$\dCat$ et les localisateurs fondamentaux de $\Cat$ (théorème~6.33 de
\jcite), bijection qui joue également un rôle important dans ce texte.
Enfin, nos preuves dépendent de manière essentielle de plusieurs résultats
obtenus par Cisinski dans son livre \cite{Cisinski}, et en particulier de la
bijection entre les localisateurs fondamentaux de $\Cat$ et les
«~$\Delta$-localisateurs test » (théorème~4.2.15 de \opcit).

\begin{notations}
Nous nous écarterons peu des notations et du vocabulaire de \jcite. On
notera $\Cat$ la catégorie des petites catégories et $\dCat$ la catégorie
des petites $2$-catégories strictes et des $2$-foncteurs stricts. On
supprimera systématiquement l'adjectif « strict », les bicatégories ne
jouant aucun rôle dans ce texte, et les $2$-foncteurs lax ou oplax ne jouant
qu'un rôle caché. La catégorie des préfaisceaux sur une petite catégorie~$A$ sera
notée $\pref{A}$. On notera $\Delta$ la catégorie des simplexes et en
particulier $\pref{\Delta}$ la catégorie des ensembles simpliciaux. On
notera $N$ le foncteur nerf $\Cat \to \pref{\Delta}$ et $i_\Delta :
\pref{\Delta} \to \Cat$ le foncteur associant à un ensemble simplicial sa
catégorie des éléments. La catégorie des foncteurs d'une catégorie $\C$ vers
une catégorie~$\D$ sera notée~$\Homi(\C, \D)$. On notera $\Delta_1$ la
catégorie correspondant à l'ensemble ordonné~$\{0 \le 1 \}$.
On s'écartera légèrement des notations de \jcite en notant $\dN : \dCat \to
\pref{\Delta}$ le foncteur nerf géométrique qui y est noté~$\NerfLaxNor$.
Enfin, si $I$ est une classe de flèches d'une catégorie~$\C$, on notera
$l(I)$ (resp.~$r(I)$) la classe des flèches de $\C$ ayant la propriété de
relèvement à gauche (resp. à droite) par rapport à $I$.
\end{notations}

\section{Rappels sur les localisateurs fondamentaux}

Dans cette section, on rappelle brièvement la définition des localisateurs
fondamentaux, introduits par Grothendieck dans \cite{GrothPS}, et de leur
généralisation $2$-catégorique, introduite par Chiche dans \jcite. Nous
renvoyons le lecteur à ce dernier texte ou à la thèse \cite{ChicheThese} de
Chiche pour plus de détails et références sur les localisateurs
fondamentaux.

\begin{defi}
Soit $\W$ une classe de flèches d'une catégorie $\C$. On dit que $\W$ est
\ndef{faiblement saturée} si elle satisfait aux conditions suivantes :
\begin{enumerate}[label={\rm(FS\arabic*)}, labelindent=\parindent, leftmargin=*]
  \item les identités des objets de $\C$ sont dans $\W$ ;
  \item la classe $\W$ satisfait à la propriété du 2 sur 3 ;
  \item toute flèche $i$ de $\C$ admettant une rétraction $r$ telle que $ri$
    soit dans $\W$ est elle-même dans $\W$.
\end{enumerate}
\end{defi}

\begin{rem}
La condition de faible saturation est une forme faible de la notion de
catégorie homotopique au sens de Dwyer, Hirschhorn, Kan et Smith
\cite{HomotCat}. Plus précisément, si $(\C, \W)$ est une catégorie
homotopique au sens de \opcit, alors la classe $\W$ de flèches de $\C$ est
faiblement saturée.
\end{rem}

\begin{paragr}
Si $u : A \to B$ est un foncteur et $b$ est un objet de $B$, on notera $A/b$
la catégorie «~comma », parfois notée $u \downarrow b$, dont les objets sont
les couples $(a, f : u(a) \to b)$, où $a$ est un objet de~$A$ et $f$ une
flèche de $B$, et dont les flèches sont les morphismes de~$A$ faisant
commuter les triangles évidents. On vérifie immédiatement que si
    \[
    \xymatrix@C=1pc@R=1.5pc{
      A \ar[rr]^u \ar[dr] & & B \ar[dl] \\
      & C &}
    \]
est un triangle commutatif de $\Cat$, alors pour tout objet $c$ de $C$, le
foncteur $u$ induit un foncteur $u/c : A/c \to B/c$ donné sur les objets par
$(a, f) \mapsto (u(a), f)$.
\end{paragr}

\begin{defi}[Grothendieck]
Un \ndef{localisateur fondamental de $\Cat$} est une classe $\W$ de foncteurs
satisfaisant aux conditions suivantes :
\begin{enumerate}[label={\rm(LF\arabic*)}, labelindent=\parindent, leftmargin=*]
  \item la classe $\W$ de flèches de $\Cat$ est faiblement saturée ;
  \item pour toute petite catégorie $A$ admettant un objet final, l'unique
    foncteur $A \to e$, où $e$ est la catégorie finale, est dans $\W$ ;
  \item pour tout triangle commutatif
    \[
    \xymatrix@C=1pc@R=1.5pc{
      A \ar[rr]^u \ar[dr] & & B \ar[dl] \\
      & C &}
    \]
    dans $\Cat$, si pour tout objet $c$ de $C$ le foncteur $u/c$ appartient
    à $\W$, alors le foncteur $u$ appartient à $\W$.
\end{enumerate}
\end{defi}

\begin{exs}\label{exs:loc_fond_Cat}
L'exemple paradigmatique de localisateur fondamental de $\Cat$ est la classe
des foncteurs dont le nerf est une équivalence d'homotopie faible
simpliciale.

Plus généralement, si $\W$ est la classe des équivalences faibles d'une
localisation de Bousfield à gauche de la structure de catégorie de modèles
classique sur les ensembles simpliciaux, alors la classe des foncteurs dont
le nerf est dans $\W$ est un localisateur fondamental de $\Cat$. (Et par
le théorème \ref{thm:bij_Cisinski}, dû à Cisinski, on obtient ainsi tous les
localisateurs fondamentaux de $\Cat$, à des restrictions ensemblistes près.)
\end{exs}

Passons maintenant à la généralisation $2$-catégorique de la notion de
localisateur fondamental de~$\Cat$.

\begin{paragr}
Si $u : A \to B$ est un $2$-foncteur et $b$ est un objet de $B$, on notera
$A/b$ la catégorie «~comma » $2$-catégorique définie de la manière suivante
:
\begin{itemize}
  \item les objets sont les couples $(a, f : u(a) \to b)$, où $a$ est
    un objet de $A$ et $f$ une $1$-flèche de $B$ ;
  \item si $(a, f)$ et $(a', f')$ sont deux objets, les $1$-flèches de
    source $(a, f)$ et de but $(a', f')$ sont les couples $(g : a \to a',
    \alpha : f'u(g) \to f$), où $g$ est une $1$-flèche de $A$ et $\alpha$ une
    $2$-flèche de $B$ ;
  \item si $(g, \alpha)$ et $(g', \alpha')$ sont deux $1$-flèches
    de source $(a, f)$ et de but $(a', f')$, les $2$-flèches de $(g,
    \alpha)$ vers $(g', \alpha')$ sont les $2$-flèches $\beta : g \to g'$ de
    $A$ telles que
    \[ \alpha'\circ (f' \ast u(\beta)) = \alpha, \]
\end{itemize}
les compositions et identités étant définies de la manière évidente.
Cette catégorie est notée $\trancheColax{A}{u}{b}$ dans \jcite, l'indice
$c$, pour « colax », indiquant l'orientation des $2$-flèches de $B$ apparaissant
dans la définition des $1$-flèches. On renvoie à la section 3 de \opcit
pour plus de détails. On vérifie, comme dans le cas catégorique, que si
    \[
    \xymatrix@C=1pc@R=1.5pc{
      A \ar[rr]^u \ar[dr] & & B \ar[dl] \\
      & C &}
    \]
est un triangle commutatif de $\dCat$ et $c$ est un objet de $C$, alors le
$2$-foncteur $u$ induit un $2$-foncteur $u/c : A/c \to B/c$.

On dira, suivant \jcite, qu'un objet $z$ d'une $2$-catégorie $A$
\ndef{admet un objet final} si pour tout objet $a$ de $A$, la catégorie
$\Homi_A(a, z)$ des flèches de $a$ vers $z$ admet un objet final.
\end{paragr}

\begin{defi}[Chiche]
Un \ndef{localisateur fondamental de $\dCat$} est une classe~$\W$ de
$2$-foncteurs satisfaisant aux conditions suivantes :
\begin{enumerate}[label={\rm(LF$_2$\arabic*)}, labelindent=\parindent, leftmargin=*]
  \item la classe $\W$ de flèches de $\dCat$ est faiblement saturée ;
  \item pour toute petite $2$-catégorie admettant un objet admettant un objet
    final, l'unique foncteur $A \to e$, où $e$ est la $2$-catégorie
    finale, est dans $\W$ ;
  \item pour tout triangle commutatif
    \[
    \xymatrix@C=1pc@R=1pc{
      A \ar[rr]^u \ar[dr] & & B \ar[dl] \\
      & C &}
    \]
    dans $\dCat$, si pour tout objet $c$ de $C$ le $2$-foncteur $u/c$
    appartient à $\W$, alors le $2$-foncteur $u$ appartient à $\W$.
\end{enumerate}
\end{defi}

\begin{paragr}\label{paragr:def_N2}
Pour donner des exemples de localisateurs fondamentaux de $\dCat$, nous
aurons besoin d'un foncteur nerf $2$-catégorique. Dans ce texte, nous
privilégierons le nerf géométrique $N_2 : \dCat \to \pref{\Delta}$.
Rappelons brièvement sa définition. Si $C$ est une $2$\nbd-catégorie, les
$n$\nbd-simplexes de $N_2(C)$ sont donnés par les $2$-foncteurs
$\widetilde{\Delta_n} \to C$, où $\widetilde{\Delta_n}$ est la $2$-catégorie
définie de la manière suivante :
\begin{itemize}
  \item ses objets sont les entiers $0$, $1$, \dots, $n$ ;
  \item si $i$ et $j$ sont deux objets, la catégorie des flèches de $i$ vers
    $j$ est donnée par l'ensemble des sous-ensembles de $\{i, \dots, j\}$
    contenant $i$ et $j$, ordonné par l'ordre opposé à l'inclusion,
\end{itemize}
les compositions et identités étant définies de la manière évidente.
\end{paragr}

\begin{exs}
Les exemples~\ref{exs:loc_fond_Cat} de localisateurs fondamentaux de $\Cat$
se généralisent en des exemples de localisateurs fondamentaux de $\dCat$ en
remplaçant le nerf usuel par le nerf géométrique. De fait, en vertu du
théorème~\ref{thm:corr_Chiche}, dû à Chiche, les localisateurs fondamentaux
de $\Cat$ sont en bijection canonique avec les localisateurs fondamentaux de
$\dCat$.
\end{exs}

\begin{defi}\label{defi:W-equiv}
Si $\W$ est un localisateur fondamental de $\Cat$ ou de $\dCat$, on
appellera \ndef{$\W$-équivalences} ses éléments. 
\end{defi}

\section{Localisateurs et accessibilité}

Le but de ce texte est d'associer à tout localisateur fondamental $\W$ de
$\dCat$ «~accessible au sens de Cisinski~» une structure de catégorie de
modèles sur $\dCat$ dont les équivalences faibles sont les éléments de $\W$.
Commençons par définir cette notion d'accessibilité.

\begin{defi}
Si $S$ est une classe de foncteurs (resp.~de $2$-foncteurs), on appellera
\ndef{localisateur fondamental de $\Cat$ (resp. de~$\dCat$) engendré par
$S$} l'intersection de tous les localisateurs fondamentaux de $\Cat$ (resp.
de $\dCat$) contenant $S$. (On vérifie immédiatement qu'on obtient bien
ainsi un localisateur fondamental.) On dira qu'un localisateur fondamental
de $\Cat$ (resp. de $\dCat$) est \emph{accessible au sens de Cisinski} s'il
est engendré  par un \emph{ensemble}.
\end{defi}

Pour démontrer l'existence de la structure de catégorie de modèles
annoncée, nous utiliserons la notion intermédiaire de $A$-localisateur
(dans le cas $A = \Delta$).

\begin{defi}[Cisinski]
Soit $A$ une petite catégorie. Notons $\Mono$ la classe des monomorphismes
de la catégorie $\pref{A}$ des préfaisceaux sur $A$. Un
\ndef{$A$-localisateur} est une classe $\W$ de flèches de $\pref{A}$
satisfaisant aux conditions suivantes :
\begin{enumerate}[label={\rm(LC\arabic*)}, labelindent=\parindent, leftmargin=*]
  \item la classe $\W$ satisfait à la propriété du deux sur trois ;
  \item on a l'inclusion $r(\Mono) \subset \W$ ;
  \item la classe $\Mono \cap \W$ est stable par image directe et composition
    transfinie.
\end{enumerate}
Si $\W$ est un $A$-localisateur, on appellera \ndef{$\W$-équivalences} les
éléments de $\W$.
\end{defi}

\begin{defi}
Si $S$ est une classe de flèches de $\pref{A}$, on appellera
\ndef{$A$-localisateur engendré par~$S$} l'intersection de tous les
localisateurs contenant $S$. (On vérifie immédiatement qu'on obtient bien
ainsi un $A$-localisateur.) On dira qu'un $A$-localisateur est
\ndef{accessible au sens de Cisinski} s'il est engendré par un
\emph{ensemble}.
\end{defi}

\begin{thm}[Cisinski]\label{thm:loc_Cisinski}
Soient $A$ une petite catégorie et $\W$ un $A$-localisateur. Les conditions
suivantes sont équivalentes :
\begin{enumerate}
  \item il existe une structure de catégorie de modèles combinatoire sur
    $\pref{A}$ dont les équivalences faibles sont les éléments de $\W$ et
    dont les cofibrations sont les monomorphismes ;
  \item le localisateur $\W$ est accessible au sens de Cisinski.
\end{enumerate}
\end{thm}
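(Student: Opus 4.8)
\emph{Esquisse de démonstration.} La démonstration suivra le premier chapitre de \cite{Cisinski}; l'implication \emph{a)} $\Rightarrow$ \emph{b)} en est la partie formelle, \emph{b)} $\Rightarrow$ \emph{a)} la partie substantielle. Pour \emph{a)} $\Rightarrow$ \emph{b)}, supposons donnée sur $\pref{A}$ une structure de catégorie de modèles combinatoire dont les cofibrations sont les monomorphismes et les équivalences faibles les éléments de $\W$. Étant combinatoire, elle admet un \emph{ensemble} $J$ de cofibrations triviales génératrices; on a alors $J \subset \Mono \cap \W$, la classe $r(J)$ est celle des fibrations et $l(r(J)) = \Mono \cap \W$ est celle des cofibrations triviales. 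Montrons que $\W$ est le $A$-localisateur engendré par $J$. D'une part, $\W$ est un $A$-localisateur contenant $J$ : (LC1) est l'axiome du deux sur trois, (LC2) traduit que $r(\Mono)$ est la classe des fibrations triviales et (LC3) que $\Mono \cap \W$, classe des cofibrations triviales, est stable par image directe et composition transfinie. D'autre part, si $\W'$ est un $A$-localisateur contenant $J$, alors, les éléments de $J$ étant des monomorphismes, (LC3) et la stabilité par rétracte (valable pour tout $A$-localisateur) donnent $\Mono \cap \W = l(r(J)) \subset \W'$; comme de plus $r(\Mono) \subset \W'$ par (LC2) et que toute $\W$-équivalence se factorise en une cofibration triviale suivie d'une fibration triviale, (LC1) entraîne $\W \subset \W'$. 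Ainsi $\W$ est le plus petit $A$-localisateur contenant l'ensemble $J$, d'où \emph{b)}.

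Pour \emph{b)} $\Rightarrow$ \emph{a)}, le plan est de construire un ensemble $I$ de monomorphismes engendrant $\Mono$, puis un ensemble $J$ de monomorphismes tels que $J \subset \Mono \cap \W$ et $r(J) \cap \W \subset r(\Mono)$, et d'invoquer le théorème de reconnaissance des catégories de modèles à engendrement cofibrant, qui produira une structure de catégorie de modèles de cofibrations $l(r(I)) = \Mono$, de fibrations $r(J)$ et d'équivalences faibles $\W$ ($\pref{A}$ étant localement présentable, elle sera combinatoire). Pour $I$ on prend, par exemple, les inclusions des bords des objets représentables. Les hypothèses du théorème de reconnaissance portant sur $\W$ seul --- propriété du deux sur trois (LC1), stabilité par rétracte, inclusion $r(I) = r(\Mono) \subset \W$ (LC2), stabilité de $\Mono \cap \W = l(r(I)) \cap \W$ par image directe et composition transfinie (LC3) --- sont exactement les axiomes des $A$-localisateurs. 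Tout se ramène donc à la construction de $J$; c'est là, et seulement là, qu'intervient l'hypothèse que $\W$ est engendré par un ensemble $S$, et c'est le point délicat.

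Pour construire $J$, le plan est de reprendre la théorie des extensions anodines de Cisinski. On munit $\pref{A}$ d'un foncteur cylindre convenable (pour $A = \Delta$, le produit par $N\Delta_1$ convient); on remplace d'abord chaque $s \in S$ par le monomorphisme $i_s$ apparaissant dans la factorisation $s = p_s \circ i_s$ fournie par l'argument du petit objet relatif à $I$ --- ce qui est licite car $p_s \in r(\Mono) \subset \W$, d'où $i_s \in \Mono \cap \W$ par (LC1) et (LC2), le $A$-localisateur engendré restant égal à $\W$ ---, et l'on prend pour $J$ l'ensemble des extensions anodines relatives au cylindre choisi et à l'ensemble $\{i_s \mid s \in S\}$, au sens de \cite{Cisinski}. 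Il s'agit ensuite d'établir : $(i)$ toute extension anodine est un monomorphisme et une $\W$-équivalence, d'où $J \subset \Mono \cap \W$; $(ii)$ une description homotopique de $\W$, à savoir l'égalité $\W = \mathsf{W}_S$, où $\mathsf{W}_S$ désigne la classe des flèches qui deviennent des équivalences d'homotopie, pour la relation d'homotopie définie par le cylindre, après remplacement fibrant relatif à $J$; $(iii)$ l'inclusion $r(J) \cap \W \subset r(\Mono)$, c'est-à-dire qu'une fibration relative à $J$ qui est une $\W$-équivalence relève à droite les monomorphismes, ce qu'on obtiendra par un argument de type Whitehead une fois ramené au cas d'objets fibrants.

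La difficulté principale sera le point $(ii)$, et en particulier le fait que $\mathsf{W}_S$ soit un $A$-localisateur : que $\Mono \cap \mathsf{W}_S$ soit stable par image directe (LC3) et que $\mathsf{W}_S$ vérifie le deux sur trois demanderont un maniement soigneux du cylindre, des cylindres relatifs et du lemme de Ken Brown. Une fois $(ii)$ acquis, l'inclusion $\mathsf{W}_S \subset \W$ résultera de ce que $\mathsf{W}_S$ est un $A$-localisateur contenant $S$, et l'inclusion opposée de la minimalité de $\W$, toute $\mathsf{W}_S$-équivalence s'obtenant à partir d'extensions anodines et d'équivalences d'homotopie --- ces dernières appartenant à tout $A$-localisateur contenant $S$, les projections du cylindre y étant des $\W$-équivalences en vertu de (LC2) et du deux sur trois. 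Le point $(ii)$ montre en outre que $\W = \mathsf{W}_S$ est une sous-catégorie accessible de la catégorie des flèches de $\pref{A}$ --- composée du foncteur accessible de remplacement fibrant relatif à $J$ et de la classe accessible des équivalences d'homotopie pour le cylindre fixé ---, ce qui fournit une autre voie vers la conclusion, via la version accessible du théorème de reconnaissance (celle de J. Smith), et rend compte de ce que « $\W$ engendré par un ensemble » soit exactement l'hypothèse requise.
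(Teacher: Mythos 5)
Le texte ne démontre pas cet énoncé : sa « preuve » consiste à renvoyer au théorème~1.4.3 de \cite{Cisinski}. Votre esquisse reconstitue correctement l'architecture de la démonstration de cette référence, et n'en diffère donc pas. L'implication \emph{a)} $\Rightarrow$ \emph{b)} est complète et juste : $\W$ est bien le $A$-localisateur engendré par un ensemble $J$ de cofibrations triviales génératrices, l'argument par factorisation et deux sur trois étant le bon (la stabilité par rétracte d'un $A$-localisateur quelconque, que vous invoquez entre parenthèses, demande elle-même un petit argument via un modèle cellulaire, mais c'est un lemme standard du chapitre~1 de \opcit). Pour \emph{b)} $\Rightarrow$ \emph{a)}, votre plan --- cylindre, remplacement de $S$ par des monomorphismes $i_s$, classe $J$ d'extensions anodines, points $(i)$--$(iii)$, puis théorème de reconnaissance ou théorème de Smith --- est fidèle à la stratégie de Cisinski ; mais il reste un programme : le c{\oe}ur de la preuve (le fait que $\mathsf{W}_S$ satisfait au deux sur trois et que $\Mono \cap \mathsf{W}_S$ est stable par image directe, c'est-à-dire tout le calcul des extensions anodines et des objets fibrants) est nommé comme « la difficulté principale » sans être établi. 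Cela n'invalide rien, mais ne constitue une démonstration que si l'on accepte, comme le fait l'article, de déférer ce contenu à \cite{Cisinski}.
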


\begin{proof}
C'est une partie du théorème 1.4.3 de \cite{Cisinski}. 
\end{proof}

Si $\W$ est un $A$-localisateur, on appellera la structure de catégorie de
modèles sur $\pref{A}$ donnée par le théorème précédent la \ndef{structure
de catégorie de modèles sur $\pref{A}$ associée à $\W$}.

\medbreak

On rappelle qu'on note $i_\Delta : \pref{\Delta} \to \Cat$ le foncteur qui
associe à tout ensemble simplicial sa catégorie des éléments et $N : \Cat
\to \pref{\Delta}$ le foncteur nerf.

\begin{thm}[Cisinski]\label{thm:bij_Cisinski}
Le couple de foncteurs
\[
i_\Delta : \pref{\Delta} \to \Cat,
\qquad
N : \Cat \to \pref{\Delta}
\]
induit une bijection
\[ \W \mapsto N^{-1}(\W), \qquad \W \mapsto i_\Delta^{-1}(\W) \]
entre la classe des $\Delta$-localisateurs contenant les équivalences
d'homotopie faibles simpliciales et la classe des localisateurs fondamentaux
de $\Cat$. De plus, cette bijection préserve l'accessibilité au sens de
Cisinski.
\end{thm}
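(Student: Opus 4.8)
Le plan est de ramener l'énoncé à deux transformations naturelles de comparaison et à la propriété du deux sur trois. Pour une petite catégorie $A$, notons $p_A : i_\Delta(N(A)) \to A$ le foncteur «~dernier sommet » : il envoie un objet $([n], \sigma)$ de $i_\Delta(N(A))$, où $\sigma : [n] \to A$ est un $n$\nbd-simplexe du nerf de $A$, sur l'objet $\sigma(n)$ de $A$ ; il est naturel en $A$. De façon symétrique, on dispose, pour tout ensemble simplicial $X$, d'une transformation naturelle (ou plus précisément d'un zigzag naturel) $q_X : N(i_\Delta(X)) \to X$, définie de manière analogue à partir des derniers sommets des chaînes de la catégorie des éléments de $X$. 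Les deux ingrédients homotopiques essentiels sont les suivants : d'une part, $p_A$ appartient à \emph{tout} localisateur fondamental $\W$ de $\Cat$ ; d'autre part, $q_X$ est toujours une équivalence d'homotopie faible simpliciale, et appartient donc à tout $\Delta$\nbd-localisateur contenant ces dernières. Une fois ces deux faits acquis, le reste est essentiellement formel.

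Détaillons la partie formelle. On vérifie d'abord que les deux applications sont bien définies. Si $\W$ est un $\Delta$\nbd-localisateur contenant les équivalences faibles simpliciales, alors $N^{-1}(\W)$ est un localisateur fondamental de $\Cat$ : la condition (LF2) résulte de ce que le nerf d'une catégorie admettant un objet final est simplicialement contractile, la condition (LF1) se tire des conditions (LC1)--(LC3) par fonctorialité de $N$, et la condition (LF3) est la seule délicate (voir plus bas). Réciproquement, si $\W$ est un localisateur fondamental de $\Cat$, alors $i_\Delta^{-1}(\W)$ est un $\Delta$\nbd-localisateur ; il contient les équivalences faibles simpliciales, car, grâce à $q$ et à la propriété du deux sur trois, le foncteur $i_\Delta$ les envoie dans $\W^1_\infty$, et $\W^1_\infty \subset \W$ par minimalité du localisateur fondamental $\W^1_\infty$. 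On montre ensuite que les deux applications sont inverses l'une de l'autre. Soit $\W$ un localisateur fondamental de $\Cat$ et soit $u$ un foncteur : la naturalité de $p$ et la propriété du deux sur trois, appliquées au carré commutatif reliant $u$ et $i_\Delta(N(u))$ au moyen de $p$, montrent que $u$ appartient à $\W$ si et seulement si $i_\Delta(N(u))$ y appartient, c'est-à-dire si et seulement si $N(u)$ appartient à $i_\Delta^{-1}(\W)$, c'est-à-dire si et seulement si $u$ appartient à $N^{-1}(i_\Delta^{-1}(\W))$ ; d'où $N^{-1}(i_\Delta^{-1}(\W)) = \W$. De même, en remplaçant $p$ par $q$, on obtient $i_\Delta^{-1}(N^{-1}(\W)) = \W$ pour tout $\Delta$\nbd-localisateur $\W$ contenant les équivalences faibles simpliciales. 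D'où la bijection.

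La préservation de l'accessibilité au sens de Cisinski est alors également formelle. Le foncteur $i_\Delta$ envoie un ensemble de flèches sur un ensemble de flèches, et le foncteur $N \circ i_\Delta$ (resp. $i_\Delta \circ N$) préserve tout $\Delta$\nbd-localisateur contenant les équivalences faibles simpliciales (resp. tout localisateur fondamental de $\Cat$), de nouveau grâce aux transformations de comparaison et à la propriété du deux sur trois. Par ailleurs, la classe des équivalences faibles simpliciales est accessible au sens de Cisinski (par exemple en vertu du théorème~\ref{thm:loc_Cisinski}, la structure de catégorie de modèles classique sur $\pref{\Delta}$ étant combinatoire de cofibrations les monomorphismes) ; fixons un ensemble $T$ l'engendrant. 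Si alors $\W$ est un localisateur fondamental de $\Cat$ engendré par un ensemble $S$, je prétends que $i_\Delta^{-1}(\W)$ est le $\Delta$\nbd-localisateur $\mathcal{V}$ engendré par $N(S) \cup T$. En effet, $N(S) \cup T \subset i_\Delta^{-1}(\W)$ (car $T$ est formé d'équivalences faibles simpliciales et $i_\Delta(N(s)) \in \W$ pour $s \in S$, $N \circ i_\Delta$ préservant $\W$), d'où $\mathcal{V} \subset i_\Delta^{-1}(\W)$ ; réciproquement, $N^{-1}(\mathcal{V})$ est un localisateur fondamental de $\Cat$ contenant $S$, donc contenant $\W$, de sorte que $i_\Delta^{-1}(\W) \subset i_\Delta^{-1}(N^{-1}(\mathcal{V})) = \mathcal{V}$ d'après la bijection. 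Le cas de $N^{-1}$ est symétrique.

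Le travail véritable est concentré dans les deux ingrédients homotopiques, et c'est là que j'attends le principal obstacle. L'assertion $p_A \in \W$ exprime l'asphéricité du foncteur $i_\Delta$, autrement dit le fait que $\Delta$ est une catégorie test (stricte) au sens de Grothendieck : pour les catégories $[n]$, elle résulte de la condition (LF2) appliquée à $[n]$ et à $i_\Delta(N([n]))$, qui admettent tous deux un objet final, et de la propriété du deux sur trois ; le cas général relève de la théorie des catégories test de Grothendieck et Maltsiniotis. Le fait que $q_X$ soit une équivalence faible se vérifie d'abord sur les préfaisceaux représentables -- pour lesquels la catégorie des éléments admet un objet final, de sorte que son nerf et le représentable sont contractiles -- puis s'étend à tout $X$ par récurrence cellulaire, en utilisant que $N \circ i_\Delta$ et l'identité envoient les monomorphismes sur des monomorphismes. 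Quant à la condition (LF3) pour $N^{-1}(\W)$, elle se déduit d'une décomposition de $N(u)$ comme colimite homotopique, à la Thomason, des $N(u/c)$ pour $c$ parcourant $C$, combinée aux propriétés de stabilité (LC1)--(LC3) d'un $\Delta$\nbd-localisateur ; symétriquement, la condition (LC3) pour $i_\Delta^{-1}(\W)$ repose sur la compatibilité de $i_\Delta$ aux colimites et aux cofibrations à la Thomason.
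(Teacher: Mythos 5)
Votre squelette formel est correct, et c'est bien le bon : une fois admis que $p_A$ appartient à tout localisateur fondamental $\W$ et que $q_X$ est une équivalence d'homotopie faible simpliciale, la naturalité et le deux sur trois donnent les deux identités $N^{-1}i_\Delta^{-1}(\W) = \W$ et $i_\Delta^{-1}N^{-1}(\W) = \W$, et votre argument de transport des générateurs (le localisateur $\mathcal{V}$ engendré par $N(S) \cup T$) établit correctement la préservation de l'accessibilité. Notez toutefois que l'article ne redémontre pas ce théorème : sa preuve consiste à l'assembler par citations (théorème 4.2.15 et remarque 4.2.16 de Cisinski, proposition 1.5.13 et exemple 1.7.18 de Maltsiniotis, corollaire 2.1.21 et proposition 3.4.25 de Cisinski). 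Vous tentez donc de redémontrer un théorème profond de la littérature, et toute la difficulté se concentre précisément dans les trois points que vous reléguez au dernier paragraphe.

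Concrètement : (i) l'appartenance de $p_A$ à tout localisateur fondamental, pour $A$ quelconque, est l'énoncé «~$\Delta$ est une catégorie test pour tout $\W$ » ; votre réduction via (LF2) ne couvre que les catégories $[n]$, et le passage du cas des représentables au cas général est exactement le théorème que vous invoquez sans le prouver. (ii) La condition (LF3) pour $N^{-1}(\W)$ demande de savoir que $N(u)$ se décompose en colimite homotopique des $N(u/c)$ \emph{pour la structure associée à $\W$} ; l'identification de cette colimite homotopique avec celle de Kan--Quillen (licite car les cofibrations coïncident) doit être dite, et la décomposition à la Thomason est elle-même un théorème non trivial. (iii) Le point le plus sérieux est (LC3) pour $i_\Delta^{-1}(\W)$ : les axiomes d'un localisateur fondamental ne disent rien des sommes amalgamées dans $\Cat$, et la stabilité de $\Mono \cap N^{-1}\bigl(i_\Delta^{-1}(\W)\bigr)$, ou plutôt de $\Mono \cap i_\Delta^{-1}(\W)$, par image directe et composition transfinie ne peut pas s'obtenir par une simple «~compatibilité de $i_\Delta$ aux colimites » ; les preuves connues passent par la construction préalable d'une structure de catégorie de modèles (la machinerie du chapitre 1 du livre de Cisinski), construction qui, dans la logique du présent article, est en aval du théorème que vous cherchez à démontrer — il y a donc un risque de circularité si vous invoquez les «~cofibrations à la Thomason ». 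En l'état, votre texte est un plan de preuve correct mais pas une preuve : les trois lemmes dont tout dépend restent à établir.
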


\begin{proof}
En vertu du théorème 4.2.15 de \cite{Cisinski}, l'application 
$\W \mapsto i_\Delta^{-1}(\W)$ définit une bijection préservant
l'accessibilité au sens de Cisinski entre la classe des localisateurs
fondamentaux de $\Cat$ dits modelables par~$\Delta$ et la classe des
$\Delta$-localisateurs dits test (voir pour ces deux notions la définition
4.2.21 de \opcit). Il résulte de la proposition 1.5.13 de \cite{Maltsi} que
tout localisateur fondamental de $\Cat$ est modelable par~$\Delta$, et du
corollaire 2.1.21 et de la proposition 3.4.25 de \cite{Cisinski} que les
$\Delta$-localisateurs test sont exactement les $\Delta$-localisateurs
contenant les équivalences d'homotopie faibles simpliciales. Enfin, le fait
que le foncteur $N$ induit un inverse de cette bijection est conséquence de
la remarque 4.2.16 de \cite{Cisinski} et de l'exemple 1.7.18 de~\cite{Maltsi}.
\end{proof}

On rappelle qu'on note $N_2 : \dCat \to \pref{\Delta}$ le foncteur nerf
géométrique (voir le paragraphe \ref{paragr:def_N2}).

\begin{thm}[Chiche]\label{thm:corr_Chiche}
Le couple de foncteurs
\[
\iota : \Cat \to \dCat,
\qquad
i_\Delta\dN : \dCat \to \Cat
\]
où $\iota$ désigne l'inclusion canonique, induit une bijection
\[ \W \mapsto \dN^{-1}i_\Delta^{-1}(\W), \qquad \W \mapsto \iota^{-1}(\W) =
\W \cap \Cat \]
entre la classe des localisateurs fondamentaux de $\Cat$ et la classe des
localisateurs fondamentaux de $\dCat$. De plus, cette bijection préserve
l'accessibilité au sens de Cisinski.
\end{thm}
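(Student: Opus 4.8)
L'approche consiste à admettre le théorème~6.33 de \jcite, qui fournit la bijection et sa compatibilité à la localisation, et à n'établir ici que l'assertion relative à l'accessibilité. Notons $\alpha$ l'application $\W \mapsto \iota^{-1}(\W) = \W \cap \Cat$, des localisateurs fondamentaux de $\dCat$ vers ceux de $\Cat$, et $\beta$ l'application $\W \mapsto \dN^{-1}i_\Delta^{-1}(\W)$ en sens inverse ; le théorème de Chiche assure que $\alpha$ et $\beta$ sont à valeurs dans les localisateurs fondamentaux et que $\alpha\beta = \mathrm{id}$ et $\beta\alpha = \mathrm{id}$. On remarquera en outre que, étant des opérations d'image réciproque de classes de flèches le long de foncteurs, $\alpha$ et $\beta$ sont croissantes pour l'inclusion ; c'est le seul ingrédient formel supplémentaire dont on se servira.

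Pour établir que $\beta$ préserve l'accessibilité, on part d'un localisateur fondamental $\W$ de $\Cat$ engendré par un ensemble $S$ de foncteurs et l'on pose $S' = \{\iota(f) \mid f \in S\}$, qui est un ensemble de $2$-foncteurs ; soit $\W'$ le localisateur fondamental de $\dCat$ engendré par $S'$. De l'inclusion $S \subseteq \W = \iota^{-1}(\beta(\W))$ résulte $S' \subseteq \beta(\W)$, d'où $\W' \subseteq \beta(\W)$ puisque $\W'$ est le plus petit localisateur fondamental de $\dCat$ contenant $S'$. Réciproquement, de $S' \subseteq \W'$ on tire $S \subseteq \iota^{-1}(\W') = \alpha(\W')$, donc $\W \subseteq \alpha(\W')$, puis $\beta(\W) \subseteq \beta(\alpha(\W')) = \W'$ en appliquant l'opération croissante $\beta$. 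Ainsi $\beta(\W) = \W'$ est engendré par l'ensemble $S'$.

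L'argument pour $\alpha$ est entièrement symétrique, le foncteur $\iota$ étant remplacé par le foncteur $i_\Delta\dN$ : si $\W$ est un localisateur fondamental de $\dCat$ engendré par un ensemble $T$ de $2$-foncteurs, on pose $T' = \{i_\Delta\dN(f) \mid f \in T\}$, qui est un ensemble de foncteurs, et l'on note $\mathcal{V}$ le localisateur fondamental de $\Cat$ qu'il engendre ; de $T \subseteq \W = \dN^{-1}i_\Delta^{-1}(\alpha(\W))$ on tire $T' \subseteq \alpha(\W)$, d'où $\mathcal{V} \subseteq \alpha(\W)$, tandis que $T' \subseteq \mathcal{V}$ donne $T \subseteq \dN^{-1}i_\Delta^{-1}(\mathcal{V}) = \beta(\mathcal{V})$, d'où $\W \subseteq \beta(\mathcal{V})$ puis $\alpha(\W) \subseteq \alpha(\beta(\mathcal{V})) = \mathcal{V}$ ; on conclut que $\alpha(\W) = \mathcal{V}$ est engendré par l'ensemble $T'$.

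Dans cette approche, une fois la bijection de Chiche acquise, il ne reste aucune difficulté substantielle : l'assertion d'accessibilité est une conséquence purement formelle du fait que les deux composées valent l'identité, de la croissance des images réciproques, et du fait que l'image d'un ensemble par une application est un ensemble. Le point véritablement délicat est donc entièrement contenu dans le théorème~6.33 de \jcite (en particulier, le fait que $\iota^{-1}$ et $\dN^{-1}i_\Delta^{-1}$ envoient des localisateurs fondamentaux sur des localisateurs fondamentaux et sont réciproques l'une de l'autre), lequel repose sur le théorème~A $2$-catégorique de Chiche et sur des techniques de catégories comma, et que l'on traite ici comme une boîte noire.
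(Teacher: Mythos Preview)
Your argument is correct. The paper's own proof is a bare citation: it refers to théorème~6.33 of \jcite for the bijection and to propositions~6.47 and~6.48 of \jcite for the preservation of accessibility. You use 6.33 as a black box just as the paper does, but instead of invoking 6.47 and 6.48 you derive the accessibility statement directly from the bijection by a purely order-theoretic argument (mutual inverses, monotonicity of inverse images, minimality of generated localizers). This is a genuine, if modest, simplification: it shows that once 6.33 is in hand, the accessibility assertion is automatic and requires no further input from \jcite. In fact your argument works verbatim for any pair of mutually inverse monotone maps between classes of ``localizers'' given by inverse image along functors, which explains why the same phenomenon recurs in the Cisinski bijection (théorème~\ref{thm:bij_Cisinski}). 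The paper's approach, by contrast, outsources both halves to Chiche; the propositions~6.47 and~6.48 cited there give explicit generating sets (namely $\{i_\Delta N_2(f)\mid f\in T\}$ and $\iota(S)$), which is exactly what your argument produces.
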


\begin{proof}
Voir le théorème~6.33 et les propositions~6.47 et 6.48 de \jcite.
(Rappelons que le foncteur qu'on note dans ce texte $N_2$ est noté
$\NerfLaxNor$ dans \opcit.)
\end{proof}

\begin{coro}
Le couple de foncteurs
\[
\iota\, i_\Delta : \pref{\Delta} \to \dCat,
\qquad
\dN : \dCat \to \pref{\Delta}
\]
induit une bijection
\[ \W \mapsto \dN^{-1}(\W), \qquad \W \mapsto i_\Delta^{-1}\iota^{-1}(\W) \]
entre la classe des $\Delta$-localisateurs contenant les équivalences
d'homotopie faibles simpliciales et la classe des localisateurs fondamentaux
de $\dCat$. De plus, cette bijection préserve l'accessibilité au sens de
Cisinski.
\end{coro}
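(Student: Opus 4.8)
The plan is to deduce the corollary formally, by composing the bijection of Theorem~\ref{thm:bij_Cisinski}, due to Cisinski, with the bijection of Theorem~\ref{thm:corr_Chiche}, due to Chiche: the former identifies $\Delta$-localisateurs (containing the simplicial weak homotopy equivalences) with localisateurs fondamentaux de $\Cat$, and the latter identifies localisateurs fondamentaux de $\Cat$ with localisateurs fondamentaux de $\dCat$, so their composite should produce exactly the asserted bijection.

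More precisely, I would argue as follows. By Theorem~\ref{thm:bij_Cisinski}, the maps $\W \mapsto N^{-1}(\W)$ and $\W \mapsto i_\Delta^{-1}(\W)$ are mutually inverse bijections, preserving accessibilité au sens de Cisinski, between the class of $\Delta$-localisateurs containing the simplicial weak homotopy equivalences and the class of localisateurs fondamentaux de $\Cat$. By Theorem~\ref{thm:corr_Chiche}, the maps $\W \mapsto \dN^{-1}i_\Delta^{-1}(\W)$ and $\W \mapsto \iota^{-1}(\W)$ are mutually inverse bijections, preserving accessibilité, between the class of localisateurs fondamentaux de $\Cat$ and the class of localisateurs fondamentaux de $\dCat$. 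Composing these two pairs of maps yields mutually inverse, accessibilité-preserving bijections between the class of $\Delta$-localisateurs containing the simplicial weak homotopy equivalences and the class of localisateurs fondamentaux de $\dCat$.

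It then remains only to check that this composite coincides with the pair of maps appearing in the statement. From $\dCat$ to $\pref{\Delta}$ this is immediate: a localisateur fondamental $\W$ de $\dCat$ is sent to $\iota^{-1}(\W)$ and then to $i_\Delta^{-1}\iota^{-1}(\W)$, as asserted. From $\pref{\Delta}$ to $\dCat$, a $\Delta$-localisateur $\W$ is sent to $N^{-1}(\W)$ and then to $\dN^{-1}i_\Delta^{-1}\bigl(N^{-1}(\W)\bigr)$; here I would invoke the fact that $\W \mapsto N^{-1}(\W)$ and $\W \mapsto i_\Delta^{-1}(\W)$ are inverse to one another in Theorem~\ref{thm:bij_Cisinski}, which gives $i_\Delta^{-1}\bigl(N^{-1}(\W)\bigr) = \W$, whence the composite is $\W \mapsto \dN^{-1}(\W)$, as asserted. (Conceptually, the naive composite of the two underlying functors $\Cat \to \pref{\Delta} \to \Cat$, namely $N i_\Delta$, carries a natural simplicial weak homotopy equivalence $N i_\Delta \to \id{\pref{\Delta}}$, so it acts trivially on preimages of classes containing the simplicial weak homotopy equivalences; this is why the functor appearing in the statement is $\dN$ rather than $N i_\Delta \dN$.)

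I do not expect any genuine obstacle here: the argument is essentially a bookkeeping exercise on the composition of two bijections already established, the only point deserving a word being the identity $i_\Delta^{-1}(N^{-1}(\W)) = \W$, which is part of the content of Theorem~\ref{thm:bij_Cisinski} itself.
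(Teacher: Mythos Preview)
Your proposal is correct and follows essentially the same approach as the paper: compose the bijections of Theorems~\ref{thm:bij_Cisinski} and~\ref{thm:corr_Chiche}, and observe that the only point needing justification is the simplification $\dN^{-1}i_\Delta^{-1}N^{-1}(\W) = \dN^{-1}(\W)$, which follows from the fact that $i_\Delta^{-1}$ and $N^{-1}$ are mutually inverse in Theorem~\ref{thm:bij_Cisinski}. The paper's proof is just a terser version of exactly this argument.
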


\begin{proof}
Cela résulte immédiatement des deux théorèmes précédents une fois qu'on a
remarqué que le premier d'entre eux entraîne l'égalité 
\[ \dN^{-1} i_\Delta^{-1} N^{-1}(\W) = \dN^{-1}(\W) \]
pour tout $\Delta$-localisateur $\W$ contenant les équivalences d'homotopie
faibles simpliciales.
\end{proof}

\begin{paragr}\label{paragr:trijection}
Les deux théorèmes et le corollaire précédents fournissent une
«~trijection~», qu'on appellera \ndef{trijection de Chiche-Cisinski}, entre
les localisateurs fondamentaux de $\Cat$, les localisateurs fondamentaux de
$\dCat$ et les $\Delta$-localisateurs contenant les équivalences d'homotopie
faibles simpliciales. De plus, cette trijection préserve l'accessibilité au
sens de Cisinski.
\end{paragr}

Bien que ce ne soit pas strictement nécessaire pour obtenir les résultats
principaux de ce texte, nous allons consacrer la fin de cette section à
comparer la notion d'accessibilité au sens de Cisinski à une notion plus
classique d'accessibilité.

\begin{defi}\label{def:access_fl}
Une classe d'objets d'une catégorie accessible $\C$ est dite
\ndef{accessible} si le foncteur d'inclusion de la sous-catégorie pleine
correspondante dans $\C$ est accessible, c'est-à-dire s'il existe un
cardinal régulier $\kappa$ pour lequel ces deux catégories sont
$\kappa$-accessibles et le foncteur d'inclusion commute aux limites
inductives $\kappa$-filtrantes. Une classe de flèches d'une catégorie
accessible $\C$ est dite \ndef{accessible} si elle est accessible considérée
comme classe d'objets de la catégorie des flèches $\Homi(\Delta_1, \C)$ de $\C$.
\end{defi}

\begin{thm}[Smith]\label{thm:Smith}
Soient $\C$ une catégorie localement présentable, $\W$ une classe de flèches
de $\C$ et $I$ un \emph{ensemble} de flèches de $\C$. On note $\Cof$ la
classe $lr(I)$. Alors les conditions suivantes sont équivalentes :
\begin{enumerate}
  \item\label{item:SmithA} il existe une structure de catégorie de modèles combinatoire sur
    $\C$ dont les équivalences faibles sont les éléments de $\W$ et dont les
    cofibrations sont les éléments de $\Cof$ ;
  \item\label{item:SmithB} les conditions suivantes sont satisfaites :
\begin{enumerate}[label={\rm(S\arabic*)}, labelindent=0.5\parindent, leftmargin=*]
  \item\label{item:Smith1} la classe $\W$ satisfait à la propriété du deux sur trois ;
  \item\label{item:Smith2} on a l'inclusion $r(I) \subset \W$ ;
  \item\label{item:Smith3} la classe $\Cof \cap \W$ est stable par image
    directe et composition transfinie ;
  \item\label{item:Smith4} la classe de flèches $\W$ est accessible.
\end{enumerate}
\end{enumerate}
\end{thm}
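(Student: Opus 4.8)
\emph{Vue d'ensemble.} Le plan est d'établir les deux implications séparément, en isolant d'emblée les deux seuls points qui ne relèvent pas de la manipulation formelle des axiomes de catégorie de modèles : l'accessibilité de la classe des équivalences faibles d'une catégorie de modèles combinatoire (pour l'implication $a) \Rightarrow b)$), et la construction, à partir de cette accessibilité, d'un \emph{ensemble} de flèches engendrant convenablement $\Cof \cap \W$ (pour l'implication $b) \Rightarrow a)$).

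\emph{Implication $a) \Rightarrow b)$.} On se donne une structure de catégorie de modèles combinatoire sur $\C$ dont les équivalences faibles forment la classe $\W$ et dont les cofibrations forment la classe $\Cof = lr(I)$. La condition (S1) n'est autre que l'axiome du deux sur trois. Comme $\Cof = lr(I)$, on a $r(I) = r\big(lr(I)\big) = r(\Cof)$ — égalité valable pour toute classe de flèches —, et $r(\Cof)$ est la classe des fibrations triviales, contenue dans $\W$, d'où (S2). Les flèches de $\Cof \cap \W$ sont exactement les cofibrations triviales, dont la stabilité par image directe et composition transfinie est classique, d'où (S3). Enfin, (S4) est précisément l'énoncé, dû à Smith (et dont une démonstration figure notamment chez Dugger, Rosický et Lurie), selon lequel les équivalences faibles d'une catégorie de modèles combinatoire forment une classe accessible de flèches ; c'est l'unique ingrédient non formel de cette implication.

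\emph{Implication $b) \Rightarrow a)$.} Comme $\C$ est localement présentable et $I$ un ensemble, l'argument du petit objet fournit un système de factorisation faible $\big(\Cof, r(I)\big)$, avec $\Cof = lr(I)$. Le point crucial est la construction d'un \emph{ensemble} $J \subset \Cof \cap \W$ tel que $r(J) = r(\Cof \cap \W)$. Pour cela, on observe que $\Cof = lr(I)$ est une sous-catégorie accessible et accessiblement plongée de la catégorie des flèches $\Homi(\Delta_1, \C)$ (car $I$ est un ensemble et $\C$ localement présentable), que $\W$ l'est aussi par (S4), et donc que l'intersection $\Cof \cap \W$, obtenue comme intersection de sous-catégories accessibles et accessiblement plongées de $\Homi(\Delta_1, \C)$, est elle-même accessible et accessiblement plongée ; de surcroît, $\Cof \cap \W$ est stable par image directe et composition transfinie en vertu de (S3). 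Une telle sous-catégorie est « d'engendrement petit » (théorème de reconnaissance de Makkai--Paré, repris par Beke et par Lurie dans ce contexte) : en choisissant un cardinal régulier $\kappa$ assez grand — de sorte que $\C$ soit localement $\kappa$-présentable, que les sources et buts des flèches de $I$ soient $\kappa$-présentables et que $\Cof \cap \W$ soit $\kappa$-accessible et $\kappa$-accessiblement plongée — et en prenant pour $J$ un ensemble de représentants des flèches de $\Cof \cap \W$ de source et but $\kappa$-présentables, un argument standard de présentation des flèches de $\Cof \cap \W$ par colimites $\kappa$-filtrantes de sous-flèches $\kappa$-petites donne $r(J) = r(\Cof \cap \W)$. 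Tout le reste est formel. Une nouvelle application de l'argument du petit objet à $J$ factorise toute flèche de $\C$ en un composé transfini d'images directes de flèches de $J$ — qui appartient donc à $\Cof \cap \W$ par (S3) — suivi d'une flèche de $r(J)$ ; on pose $\Fib = r(J) = r(\Cof \cap \W)$. La catégorie $\C$ étant complète et cocomplète, on dispose ainsi des deux factorisations fonctorielles ; l'axiome du deux sur trois est (S1) ; les axiomes de relèvement résultent des définitions de $\Cof$ et de $\Fib$ et de l'égalité $\Fib \cap \W = r(I)$. Pour cette dernière : d'une part $r(I) = r(\Cof) \subset r(\Cof \cap \W) = \Fib$ et (S2) donnent $r(I) \subset \Fib \cap \W$ ; d'autre part, si $p \in \Fib \cap \W$, on factorise $p = q\,j$ avec $j \in \Cof$ et $q \in r(I) \subset \W$, d'où $j \in \Cof \cap \W$ par deux sur trois, donc $j$ a la propriété de relèvement à gauche relativement à $p \in \Fib$, et l'argument du rétracte présente $p$ comme un rétracte de $q$, d'où $p \in r(I)$. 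On en tire la caractérisation $\W = \{\,q\,j \mid j \in \Cof \cap \W,\ q \in r(I)\,\}$, puis, par fonctorialité des factorisations et stabilité par rétractes de $r(I)$, la stabilité de $\W$ par rétractes — celles de $\Cof$ et de $\Fib$ étant immédiates. On obtient bien une structure de catégorie de modèles, combinatoire puisque engendrée par les ensembles $I$ et $J$.

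\emph{Principal obstacle.} Les seules difficultés réelles sont l'accessibilité des équivalences faibles d'une catégorie de modèles combinatoire (dans le sens $a) \Rightarrow b)$) et la construction de l'ensemble générateur $J$ (dans le sens $b) \Rightarrow a)$) ; toutes deux reposent de manière essentielle sur la théorie des catégories localement présentables et accessibles — conditions de l'ensemble solution, présentation des objets comme colimites filtrantes de sous-objets petits —, le reste de la démonstration n'étant que de la vérification formelle.
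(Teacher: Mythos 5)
Votre texte reconstruit pour l'essentiel la preuve standard du théorème de Smith (celle qu'on trouve chez Beke, Rosický et Lurie), là où l'article se contente de renvoyer au corollaire A.2.6.6 et à la proposition A.2.6.8 de \cite{LurieHTT} et à \cite{RosickyComb}. L'identification des deux seuls points non formels est juste : l'accessibilité de la classe des équivalences faibles d'une catégorie de modèles combinatoire pour le sens direct, et la construction d'un \emph{ensemble} $J \subset \Cof \cap \W$ vérifiant $r(J) = r(\Cof \cap \W)$ pour la réciproque. La vérification des axiomes à partir de $I$ et $J$ (factorisations, relèvements, égalité $\Fib \cap \W = r(I)$ par l'argument du rétracte) est correcte. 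Signalons toutefois que l'«~argument standard~» censé donner $r(J) = r(\Cof \cap \W)$ est en réalité le c\oe{}ur technique de la démonstration : écrire une flèche de $\Cof \cap \W$ comme colimite $\kappa$-filtrante de flèches $\kappa$-présentables de $\Cof \cap \W$ ne suffit pas, il faut la réaliser comme rétracte d'un composé transfini d'images directes de telles flèches (technique des «~bonnes colimites~» de Makkai--Paré, reprise par Lurie) ; vos références sont les bonnes, mais le raccourci doit être assumé comme tel.

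La seule lacune véritable concerne la stabilité de $\W$ par rétractes. Telle que vous la déduisez (\emph{par fonctorialité des factorisations et stabilité par rétractes de $r(I)$}), l'argument est circulaire : si $f$ est un rétracte de $f' \in \W$, la factorisation fonctorielle fournit $f = qj$ et $f' = q'j'$ avec $q$ rétracte de $q'$ et $j$ rétracte de $j'$ ; on a bien $q \in r(I) \subset \W$ et $j' \in \Cof \cap \W$ par deux sur trois, mais conclure que $j \in \W$ exige de savoir qu'un rétracte d'un élément de $\Cof \cap \W$ est encore dans $\W$, ce qui est précisément un cas particulier de l'énoncé visé. Le même point manque d'ailleurs en amont : pour identifier les cofibrations triviales de la structure obtenue, c'est-à-dire pour avoir $lr(J) \subset \Cof \cap \W$, il faut que $\Cof \cap \W$ soit stable par rétractes, puisque $lr(J)$ est formée des rétractes des composés transfinis d'images directes d'éléments de $J$. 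La correction tient en une ligne mais doit être explicite : c'est la condition \ref{item:Smith4} qui fournit directement la stabilité de $\W$ par rétractes, un rétracte étant la limite inductive d'un diagramme indexé par la catégorie engendrée par un idempotent, laquelle est $\kappa$-filtrante pour tout cardinal régulier $\kappa$. C'est exactement la remarque parenthétique que fait l'article pour se ramener à l'énoncé de Lurie, et il faut l'invoquer au même titre que les deux ingrédients que vous isolez.
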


\begin{proof}
Voir par exemple le corollaire A.2.6.6 et la proposition A.2.6.8 de
\cite{LurieHTT} (en tenant compte du fait qu'une classe de flèches accessible
est stable par rétractes). Pour l'implication~\hbox{$\ref{item:SmithB} \Rightarrow
\ref{item:SmithA}$}, voir également \cite{RosickyComb}.
\end{proof}

\begin{coro}\label{coro:acc_delta_loc}
Un $A$-localisateur est accessible au sens de Cisinski si et seulement s'il
est accessible en tant que classe de flèches de $\pref{A}$.
\end{coro}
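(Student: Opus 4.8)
Le plan est de combiner les deux caractérisations des structures de catégorie de modèles combinatoires dont nous disposons. Le théorème~\ref{thm:loc_Cisinski} de Cisinski est propre aux catégories de préfaisceaux : il impose les monomorphismes comme cofibrations et caractérise l'existence d'une telle structure par l'accessibilité au sens de Cisinski. Le théorème~\ref{thm:Smith} de Smith, lui, vaut dans toute catégorie localement présentable, mais exige que les cofibrations soient de la forme $lr(I)$ pour un ensemble $I$ de flèches et fait intervenir l'accessibilité de la classe~$\W$ comme l'une de ses hypothèses. On passera de l'un à l'autre au moyen du fait classique que les monomorphismes d'une catégorie de préfaisceaux forment précisément une classe de la forme $lr(I)$.

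Concrètement, on commencerait par fixer un \emph{ensemble} $I$ de monomorphismes de $\pref{A}$ tel que $\Mono = lr(I)$ : c'est un résultat classique, $I$ pouvant être pris égal à l'ensemble des inclusions de bord des préfaisceaux représentables (voir \cite{Cisinski}). On appliquerait alors le théorème~\ref{thm:Smith} à $\C = \pref{A}$, qui est localement présentable, et à cet ensemble~$I$ ; on a ainsi $\Cof := lr(I) = \Mono$, et l'on notera au passage l'égalité $r(I) = r(lr(I)) = r(\Mono)$.

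Soit maintenant $\W$ un $A$-localisateur. D'après le théorème~\ref{thm:loc_Cisinski}, la classe~$\W$ est accessible au sens de Cisinski si et seulement s'il existe une structure de catégorie de modèles combinatoire sur $\pref{A}$ dont les équivalences faibles sont les éléments de~$\W$ et dont les cofibrations sont les monomorphismes, c'est-à-dire les éléments de $\Cof$. En vertu du théorème~\ref{thm:Smith}, cela équivaut à la conjonction des conditions (S1), (S2), (S3) et (S4). Or (S1) n'est autre que (LC1), (S2) équivaut à (LC2) grâce à l'égalité $r(I) = r(\Mono)$, et (S3) n'est autre que (LC3) puisque $\Cof = \Mono$ ; ces trois conditions sont donc automatiquement satisfaites, $\W$ étant un $A$-localisateur. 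Par conséquent, la conjonction de (S1), (S2), (S3) et (S4) équivaut à la seule condition (S4), à savoir à l'accessibilité de~$\W$ en tant que classe de flèches de $\pref{A}$ au sens de la définition~\ref{def:access_fl} ; c'est précisément l'équivalence annoncée. Le seul point réclamant un peu de soin, qui ne constitue d'ailleurs pas un obstacle véritable, est l'identification de~$\Mono$ à une classe $lr(I)$ pour un ensemble $I$, ainsi que la vérification que les conditions (LC$i$) et (S$i$) se correspondent, en particulier l'égalité $r(I) = r(\Mono)$ qui fait coïncider (S2) et (LC2).
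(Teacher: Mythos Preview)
Ton argument est correct et suit exactement la même démarche que celle de l'article : on fixe un modèle cellulaire $I$ de $\pref{A}$ (de sorte que $lr(I) = \Mono$), puis on combine le théorème~\ref{thm:loc_Cisinski} de Cisinski et le théorème~\ref{thm:Smith} de Smith pour passer de l'accessibilité au sens de Cisinski à l'existence de la structure de modèles, puis à l'accessibilité de~$\W$ comme classe de flèches. Ta rédaction est simplement plus explicite que celle de l'article sur la vérification que les conditions (S1)--(S3) coïncident avec les axiomes (LC1)--(LC3) d'un $A$-localisateur (via $r(I) = r(lr(I)) = r(\Mono)$ et $\Cof = \Mono$), ce que l'article laisse implicite.
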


\begin{proof}
Soit $\W$ un $A$-localisateur. Fixons $I$ un modèle cellulaire de $\pref{A}$
au sens de Cisinski, c'est-à-dire un ensemble $I$ tel que $lr(I)$ soit la
classe des monomorphismes de $\pref{A}$. Un tel ensemble existe toujours en
vertu par exemple de la proposition~1.2.27 de \cite{Cisinski}. Il résulte alors
du théorème de Smith appliqué à $\W$ et $I$ et du théorème~\ref{thm:loc_Cisinski} de Cisinski
appliqué à $\W$ que les trois conditions suivantes sont équivalentes :
\begin{enumerate}[label=\alph*)]
  \item la classe de flèches $\W$ est accessible ;
  \item il existe une structure de catégorie de modèles sur $\pref{A}$ dont
    les équivalences faibles sont les $\W$-équivalences et dont les
    cofibrations sont les monomorphismes ;
  \item le localisateur $\W$ est accessible au sens de Cisinski,
\end{enumerate}
ce qui achève la démonstration.
\end{proof}

\begin{prop}
La trijection de Chiche-Cisinski préserve l'accessibilité au sens des
classes de flèches (définition~\ref{def:access_fl}).
\end{prop}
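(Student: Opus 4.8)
Il s'agit de montrer que, dans un triplet associé par la trijection de Chiche-Cisinski, l'un des trois localisateurs est accessible en tant que classe de flèches — de $\Cat$, de $\dCat$ ou de $\pref{\Delta}$ selon le sommet — si et seulement si les deux autres le sont. Le plan est de tout ramener au corollaire~\jref{coro:acc_delta_loc}, qui fournit déjà l'équivalence souhaitée au sommet des $\Delta$-localisateurs : un $\Delta$-localisateur est accessible au sens de Cisinski si et seulement s'il est accessible en tant que classe de flèches de $\pref{\Delta}$. Puisque le paragraphe~\jref{paragr:trijection} assure que la trijection préserve l'accessibilité au sens de Cisinski, il suffira d'établir que, pour un localisateur fondamental de $\Cat$ et pour un localisateur fondamental de $\dCat$, l'accessibilité en tant que classe de flèches équivaut à l'accessibilité au sens de Cisinski ; le sommet des $\Delta$-localisateurs servira alors de relais.

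On commencerait par dégager le lemme suivant : \emph{si $F : \C \to \D$ est un foncteur accessible entre catégories localement présentables et si $\W$ est une classe de flèches accessible de $\D$, alors $F^{-1}(\W)$ est une classe de flèches accessible de $\C$.} Pour le démontrer, on observe que $F$ induit un foncteur accessible $\Homi(\Delta_1, \C) \to \Homi(\Delta_1, \D)$ et que la sous-catégorie pleine de $\Homi(\Delta_1, \C)$ correspondant à $F^{-1}(\W)$ s'identifie, à équivalence près, au pseudo-produit fibré de ce foncteur le long de l'inclusion $\W \hookrightarrow \Homi(\Delta_1, \D)$, accessible par hypothèse ; or un pseudo-produit fibré de catégories accessibles le long de foncteurs accessibles est une catégorie accessible dont les foncteurs de projection sont accessibles (résultat classique sur les catégories accessibles, voir par exemple \cite{LurieHTT}), et la projection vers $\Homi(\Delta_1, \C)$ est exactement le foncteur d'inclusion de $F^{-1}(\W)$. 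C'est cet appel à la stabilité des catégories accessibles — et de leurs foncteurs de projection — par pseudo-produit fibré qui devrait constituer le point le plus délicat ; tout le reste est formel. On utilisera aussi que les foncteurs pertinents sont accessibles, ce qui se vérifie sans peine : $N : \Cat \to \pref{\Delta}$ et $\dN : \dCat \to \pref{\Delta}$ le sont, étant même finitaires puisque définis à l'aide d'objets de présentation finie (pour $\dN$, les $2$-catégories $\widetilde{\Delta_n}$ du paragraphe~\jref{paragr:def_N2}) ; $i_\Delta : \pref{\Delta} \to \Cat$ préserve les limites inductives filtrantes ; et $\iota : \Cat \to \dCat$ préserve toutes les limites inductives, son adjoint à droite étant le foncteur « catégorie sous-jacente ». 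Il en va par conséquent de même des composés $i_\Delta\dN : \dCat \to \Cat$ et $\iota\, i_\Delta : \pref{\Delta} \to \dCat$.

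Le cas de $\Cat$ se traiterait alors comme suit. Soit $\W$ un localisateur fondamental de $\Cat$ et soit $\W' = i_\Delta^{-1}(\W)$ le $\Delta$-localisateur associé, de sorte que $\W = N^{-1}(\W')$ (théorème~\jref{thm:bij_Cisinski}). Si $\W$ est accessible au sens de Cisinski, il en va de même de $\W'$ (théorème~\jref{thm:bij_Cisinski}), donc $\W'$ est une classe de flèches accessible de $\pref{\Delta}$ (corollaire~\jref{coro:acc_delta_loc}), donc $\W = N^{-1}(\W')$ est une classe de flèches accessible de $\Cat$ en appliquant le lemme au foncteur accessible $N$. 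Réciproquement, si $\W$ est une classe de flèches accessible de $\Cat$, le lemme appliqué au foncteur accessible $i_\Delta$ montre que $\W' = i_\Delta^{-1}(\W)$ est une classe de flèches accessible de $\pref{\Delta}$, donc accessible au sens de Cisinski (corollaire~\jref{coro:acc_delta_loc}), donc $\W$ l'est aussi (théorème~\jref{thm:bij_Cisinski}). Le cas de $\dCat$ est identique, le théorème~\jref{thm:corr_Chiche} jouant le rôle du théorème~\jref{thm:bij_Cisinski} : en posant $\W'' = \iota^{-1}(\W) = \W \cap \Cat$, on a $\W = (i_\Delta\dN)^{-1}(\W'')$, et l'on applique le lemme à $i_\Delta\dN$ dans un sens et à $\iota$ dans l'autre, en s'appuyant sur le cas de $\Cat$ déjà établi et sur le théorème~\jref{thm:corr_Chiche}. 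En combinant les deux cas avec le corollaire~\jref{coro:acc_delta_loc} et le paragraphe~\jref{paragr:trijection}, on obtient la proposition.
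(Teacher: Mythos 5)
Votre démonstration est correcte, mais elle suit un chemin sensiblement plus long que celui du texte. La preuve de l'article est directe : les catégories $\pref{\Delta}$, $\Cat$ et $\dCat$ étant accessibles, tout adjoint (à gauche ou à droite) entre elles est un foncteur accessible (proposition 2.23 de \cite{AdamRos}), donc $N$, $i_\Delta$, $\iota$ et $\dN$ le sont ; comme chaque localisateur d'un triplet de la trijection est l'image réciproque d'un autre par l'un de ces foncteurs (ou un de leurs composés), il suffit alors d'invoquer le fait que l'image réciproque d'une classe de flèches accessible par un foncteur accessible est accessible (remarque 2.50 de \opcit). Vous dégagez bien ce même lemme clef — votre argument par pseudo-produit fibré en est essentiellement la démonstration donnée dans \cite{AdamRos} ; notez seulement qu'il faut que $\W$ soit stable par isomorphisme dans la catégorie des flèches pour identifier le pseudo-produit fibré, à équivalence près, à la sous-catégorie pleine correspondant à $F^{-1}(\W)$, ce qui est acquis ici par saturation faible des localisateurs — et vous vérifiez l'accessibilité des foncteurs à la main (caractère finitaire de $N$ et de $\dN$, existence d'adjoints à droite pour $i_\Delta$ et $\iota$), ce qui est correct mais moins économique que l'argument général sur les adjoints entre catégories accessibles. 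En revanche, le détour par l'accessibilité au sens de Cisinski, le corollaire~\jref{coro:acc_delta_loc} et la préservation de cette accessibilité par la trijection est superflu : une fois le lemme et l'accessibilité des quatre foncteurs acquis, la proposition s'obtient immédiatement, sans faire intervenir le théorème de Smith ni le théorème~\jref{thm:loc_Cisinski}. Votre détour a toutefois le mérite d'établir au passage le corollaire qui suit la proposition dans le texte, à savoir l'équivalence, pour les localisateurs fondamentaux de $\Cat$ et de $\dCat$, entre l'accessibilité au sens de Cisinski et l'accessibilité en tant que classe de flèches.
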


\begin{proof}
Les catégories $\pref{\Delta}$, $\Cat$ et $\dCat$ étant accessibles, tout
adjoint à gauche ou à droite entre ces catégories est accessible (voir la
proposition 2.23 de \cite{AdamRos}). On en déduit que les foncteurs~$N$,
$i_\Delta$, $\iota$ et $\dN$ sont accessibles. Le résultat est alors
conséquence du fait que l'image réciproque d'une classe de flèches
accessible par un foncteur accessible est accessible (voir la
remarque 2.50 de \opcit).
\end{proof}

\begin{coro}
Un localisateur fondamental de $\Cat$ (resp. de $\dCat$) est accessible au
sens de Cisinski si et seulement s'il est accessible en tant que classe de
flèches de $\Cat$ (resp. de $\dCat$).
\end{coro}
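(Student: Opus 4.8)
Le plan est de ramener l'énoncé au cas, déjà traité, des $\Delta$-localisateurs, par l'intermédiaire de la trijection de Chiche-Cisinski. On traite le cas de $\Cat$, celui de $\dCat$ étant en tout point analogue. Soit donc $\W$ un localisateur fondamental de $\Cat$ ; on note $\W_\Delta$ le $\Delta$-localisateur qui lui correspond via la trijection du paragraphe~\ref{paragr:trijection}, de sorte que $\W_\Delta$ est un $\Delta$-localisateur contenant les équivalences d'homotopie faibles simpliciales.

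Le cœur de l'argument repose sur trois faits déjà établis. Premièrement, la trijection de Chiche-Cisinski préserve l'accessibilité au sens de Cisinski (paragraphe~\ref{paragr:trijection}) : $\W$ est accessible au sens de Cisinski si et seulement si $\W_\Delta$ l'est. Deuxièmement, d'après la proposition précédente, cette même trijection préserve l'accessibilité au sens des classes de flèches : $\W$ est accessible en tant que classe de flèches de $\Cat$ si et seulement si $\W_\Delta$ est accessible en tant que classe de flèches de $\pref{\Delta}$. Troisièmement, le corollaire~\ref{coro:acc_delta_loc} identifie, pour un $\Delta$-localisateur, l'accessibilité au sens de Cisinski et l'accessibilité en tant que classe de flèches de $\pref{\Delta}$. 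En enchaînant ces trois équivalences, on obtient l'équivalence voulue pour $\W$, et de même pour un localisateur fondamental de $\dCat$.

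L'essentiel du travail ayant été fait dans le corollaire~\ref{coro:acc_delta_loc} et dans la proposition précédente, il ne resterait ici qu'à articuler proprement ces résultats ; aucune difficulté sérieuse n'est à prévoir. Le seul point demandant un minimum d'attention est de vérifier que le mot « trijection » renvoie bien à la même correspondance dans les deux énoncés de préservation invoqués, ce qui est clair puisqu'il n'existe qu'une trijection de Chiche-Cisinski. Pour le cas de $\dCat$, on pourrait alternativement invoquer directement le corollaire du paragraphe~\ref{paragr:trijection} reliant les localisateurs fondamentaux de $\dCat$ aux $\Delta$-localisateurs, au lieu de composer séparément les bijections de Chiche et de Cisinski.
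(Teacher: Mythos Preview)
Your proof is correct and follows exactly the same strategy as the paper: use that the trijection of Chiche-Cisinski preserves both notions of accessibility to reduce to the $\Delta$-localisateur case, where the two notions coincide by corollaire~\ref{coro:acc_delta_loc}. The paper's proof is simply a one-sentence compression of the argument you have spelled out.
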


\begin{proof}
La trijection de Chiche-Cisinski préservant l'accessibilité au sens de
Cisinski et l'accessibilité en tant que classe de flèches, le résultat est
conséquence immédiate du fait que ces deux notions coïncident pour les
$\Delta$-localisateurs (corollaire~\ref{coro:acc_delta_loc}).
\end{proof}

\begin{paragr}
Les deux notions d'accessibilité coïncidant, nous parlerons maintenant
simplement de $A$-localisateurs (resp. de localisateurs fondamentaux de
$\Cat$, resp. de localisateurs fondamentaux de $\dCat$) \ndef{accessibles}.
\end{paragr}

\begin{rem}
Il résulte de la proposition 1.4.28 de \cite{Cisinski} (resp. de la
proposition~2.4.12 de~\cite{Maltsi}, resp. de notre future
proposition~\ref{prop:loc_stable_lim}) que tout $A$-localisateur (resp.
tout localisateur fondamental de $\Cat$, resp. tout localisateur
fondamental de~$\dCat$) est stable par limite inductive suffisamment
filtrante. En vertu du théorème 6.17 de~\cite{AdamRos}, l'axiome de
grands cardinaux appelé «~principe de Vop\u enka~» implique donc que tout
$A$-localisateur (resp. tout localisateur fondamental de $\Cat$, resp. tout
localisateur fondamental de $\dCat$) est accessible.
\end{rem}

\section{La structure à la Thomason « classique » sur $\tdCat$}

\begin{paragr}
On notera $\Wdoo$ la classe des $2$-foncteurs (stricts) qui sont envoyés sur
des équivalences d'homotopie faibles simpliciales par le foncteur $N_2 :
\dCat \to \pref{\Delta}$. (Cette classe est notée $\W^2_\infty$ dans
l'introduction du présent article et dans \jcite.) On
appellera \ndef{$\Wdoo$\nbd-équivalences} ses éléments, conformément à la
terminologie introduite dans la définition~\ref{defi:W-equiv}.
\end{paragr}

\begin{paragr}\label{paragr:KanQuillen}
On appellera \ndef{structure de catégorie de modèles de Kan-Quillen} la
structure de catégorie de modèles sur $\pref{\Delta}$, introduite par
Quillen dans \cite{Quillen}, dont les équivalences faibles sont les équivalences
d'homotopie faibles et dont les cofibrations sont les monomorphismes. On
rappelle que cette structure de catégorie de modèles est combinatoire et
propre (voir par exemple le théorème~2.1.42 de~\cite{Cisinski}), et qu'un
ensemble de générateurs pour les cofibrations est donné par
\[ I = \{i_n : \bord{\Delta_n} \hookto \Delta_n \mid n \ge 0\}, \]
où $\bord{\Delta_n}$ désigne le bord du $n$-simplexe $\Delta_n$ dans
$\pref{\Delta}$ et $i_n : \bord{\Delta_n} \hookto \Delta_n$ l'inclusion
canonique.
\end{paragr}

\begin{paragr}\label{paragr:Sd_Ex}
On rappelle (voir \cite{KanCSSC}) qu'on a une adjonction
\[ \Sd : \pref{\Delta} \rightleftarrows \pref{\Delta} : \Ex, \]
où $\Sd$ est le foncteur de subdivision barycentrique et $\Ex$ est le
foncteur de Kan, et des transformations naturelles 
\[ \alpha : \Sd \to \id{\pref{\Delta}}, \quad \beta : \id{\pref{\Delta}} \to
\Ex, \]
transposées l'une de l'autre, qui sont des équivalences d'homotopie faibles
argument par argument.
\end{paragr}

\begin{paragr}
Enfin, on rappelle que le foncteur $N_2 : \dCat \to \pref{\Delta}$ admet un
adjoint à gauche, qu'on notera $c_2 : \pref{\Delta} \to \dCat$ (voir par
exemple le paragraphe 5.10 de \cite{AraMaltsi}).
\end{paragr}

\begin{defi}
Une \ndef{cofibration de Thomason de $\dCat$} est un $2$-foncteur élément de
la classe~$lr(c_2 \Sd^2(I))$.
\end{defi}

\begin{thm}[Ara-Maltsiniotis]\label{thm:AraMaltsi}
La catégorie $\dCat$ admet une structure de catégorie de modèles
combinatoire propre dont les équivalences faibles sont les
$\Wdoo$\nbd-équivalences et dont les cofibrations sont les cofibrations de
Thomason.
\end{thm}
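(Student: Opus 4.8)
La stratégie consiste à transporter la structure de catégorie de modèles de Kan-Quillen de $\pref{\Delta}$ vers $\dCat$ le long de l'adjonction composée
\[ c_2\Sd^2 : \pref{\Delta} \rightleftarrows \dCat : \Ex^2 N_2, \]
obtenue en composant l'adjonction $\Sd \dashv \Ex$ du paragraphe~\ref{paragr:Sd_Ex}, itérée, avec l'adjonction $c_2 \dashv N_2$. Notons $J$ l'ensemble des inclusions de cornets $\Lambda^k_n \hookto \Delta_n$ ($n \ge 1$, $0 \le k \le n$), qui engendre les cofibrations triviales de la structure de Kan-Quillen. La structure candidate sur $\dCat$ est celle dont les cofibrations sont les cofibrations de Thomason $lr(c_2\Sd^2(I))$, les fibrations les $2$\nbd-foncteurs ayant la propriété de relèvement à droite par rapport à $c_2\Sd^2(J)$, et les équivalences faibles les $\Wdoo$\nbd-équivalences. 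Ces équivalences faibles s'identifient bien à celles fournies par le transport : la transformation naturelle $\beta : \id{\pref{\Delta}} \to \Ex$ étant une équivalence faible simpliciale argument par argument, le foncteur $\Ex$, donc $\Ex^2$, préserve et reflète les équivalences faibles, de sorte qu'un $2$\nbd-foncteur $f$ est une $\Wdoo$\nbd-équivalence si et seulement si $N_2(f)$ est une équivalence faible, si et seulement si $\Ex^2 N_2(f)$ en est une. Comme $\dCat$ est localement présentable, la structure ainsi obtenue sera automatiquement combinatoire (on peut aussi invoquer le théorème de Smith~\ref{thm:Smith}).

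Pour appliquer le critère de transport de Kan, deux points restent à vérifier. Le premier — que $c_2\Sd^2(I)$ et $c_2\Sd^2(J)$ permettent l'argument du petit objet dans $\dCat$ — est immédiat, tous les objets de $\dCat$ étant petits. Le second, qui est le cœur de la démonstration, est la condition d'acyclicité : le foncteur $\Ex^2 N_2$ envoie toute composée transfinie d'images directes de flèches de $c_2\Sd^2(J)$ sur une équivalence faible ou, ce qui revient au même d'après ce qui précède, toute telle composée est une $\Wdoo$\nbd-équivalence. Les autres axiomes (propriétés de relèvement, inclusion $r(c_2\Sd^2(I)) \subset \Wdoo$ — cette classe étant, par adjonction, formée des $2$\nbd-foncteurs envoyés sur une fibration triviale par $\Ex^2 N_2$, etc.) sont alors formels.

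Pour établir la condition d'acyclicité, je transposerais la méthode de Thomason pour $\Cat$ au cadre $2$\nbd-catégorique. On introduit une classe convenable de $2$\nbd-foncteurs \emph{de type Dwyer}, analogue $2$\nbd-catégorique des applications de Dwyer, à savoir les inclusions d'une sous-$2$\nbd-catégorie «~cribleuse~» admettant un voisinage qui se rétracte sur elle par déformation, et l'on démontre : \emph{a)} le foncteur $c_2\Sd^2$ envoie tout monomorphisme de $\pref{\Delta}$ sur un $2$\nbd-foncteur de type Dwyer ; \emph{b)} les $2$\nbd-foncteurs de type Dwyer sont stables par image directe dans $\dCat$ ; \emph{c)} le nerf géométrique $N_2$ envoie un $2$\nbd-foncteur de type Dwyer sur une extension anodine d'ensembles simpliciaux. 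Les flèches de $J$ étant des monomorphismes, il résulte de \emph{a)} et \emph{b)} que toute image directe d'une flèche $c_2\Sd^2(j)$, $j \in J$, est un $2$\nbd-foncteur de type Dwyer, donc, d'après \emph{c)}, est envoyée par $N_2$ sur une extension anodine ; comme $N_2$ commute aux limites inductives filtrantes — la $2$\nbd-catégorie $\widetilde{\Delta_n}$ du paragraphe~\ref{paragr:def_N2} étant finie, donc de présentation finie — et que les extensions anodines sont stables par composition transfinie, on en déduit que $N_2$ envoie toute composée transfinie d'images directes de flèches de $c_2\Sd^2(J)$ sur une extension anodine, en particulier sur une équivalence faible. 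Le point difficile sera la conjonction de \emph{a)} et \emph{c)} : il faut d'une part décrire explicitement les $2$\nbd-catégories $c_2\Sd^2(\Delta_n)$ et l'action de $c_2\Sd^2$ sur les inclusions de sous-ensembles simpliciaux — c'est là qu'intervient de façon essentielle la double subdivision, elle seule «~rigidifiant~» assez les simplexes pour que ces inclusions possèdent la structure de type Dwyer voulue, une seule subdivision ne suffisant pas, comme dans le cas catégorique — et d'autre part contrôler à homotopie près le nerf géométrique d'une inclusion de crible munie de son voisinage rétractant.

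Il reste à établir la propreté. Pour la propreté à gauche : par l'argument du petit objet, toute cofibration de Thomason est un rétracte d'une composée transfinie d'images directes de flèches de $c_2\Sd^2(I)$, donc, à rétracte près, un $2$\nbd-foncteur de type Dwyer ; le nerf géométrique envoyant ces $2$\nbd-foncteurs sur des monomorphismes et tout carré cocartésien dont une flèche est de type Dwyer sur un carré homotopiquement cocartésien de $\pref{\Delta}$, la propreté à gauche de la structure de Kan-Quillen (voir le paragraphe~\ref{paragr:KanQuillen}) entraîne que l'image directe d'une $\Wdoo$\nbd-équivalence le long d'une cofibration de Thomason est une $\Wdoo$\nbd-équivalence. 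Pour la propreté à droite : les foncteurs $N_2$ et $\Ex$ étant des adjoints à droite, le foncteur $\Ex^2 N_2$ préserve les produits fibrés ; il envoie les fibrations de Thomason sur des fibrations de Kan (par adjonction, $p \in r(c_2\Sd^2(J))$ équivaut à $\Ex^2 N_2(p) \in r(J)$) ; et il reflète les équivalences faibles d'après l'identification déjà faite. La propreté à droite de la structure de Kan-Quillen entraîne donc celle de $\dCat$ : dans un carré cartésien, l'image réciproque d'une $\Wdoo$\nbd-équivalence le long d'une fibration de Thomason est une $\Wdoo$\nbd-équivalence.
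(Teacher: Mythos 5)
Le présent texte ne démontre pas ce théorème : il le cite (théorème 6.27 de \cite{AraMaltsi}). Votre schéma de preuve est précisément celui de cette référence — transfert de la structure de Kan--Quillen le long de $c_2\Sd^2 \dashv \Ex^2\dN$, la condition d'acyclicité étant ramenée à un analogue $2$\nbd-catégorique des morphismes de Dwyer de Thomason — et les réductions formelles que vous effectuez (identification des équivalences faibles transférées aux $\Wdoo$\nbd-équivalences via $\beta$, argument du petit objet dans une catégorie localement présentable, caractère combinatoire, et l'argument de propreté à droite par préservation des produits fibrés et des fibrations et réflexion des équivalences faibles par $\Ex^2\dN$) sont correctes.

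Il y a en revanche une erreur au point \emph{c)}, qui est le c\oe{}ur de l'affaire. L'énoncé « $N_2$ envoie un $2$\nbd-foncteur de type Dwyer sur une extension anodine » est faux : combiné à votre point \emph{a)}, il impliquerait que $N_2c_2\Sd^2$ envoie tout monomorphisme sur une équivalence faible, alors que, par exemple, $N_2c_2\Sd^2(\bord\Delta_1)$ a deux composantes connexes tandis que $N_2c_2\Sd^2(\Delta_1)$ est connexe (c'est le nerf d'un poset en zigzag). L'énoncé correct — que vous formulez d'ailleurs vous-même dans votre argument de propreté à gauche — est que $N_2$ envoie un $2$\nbd-foncteur de type Dwyer sur un monomorphisme et tout carré cocartésien le long d'un tel $2$\nbd-foncteur sur un carré homotopiquement cocartésien. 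Mais avec cet énoncé corrigé, la condition d'acyclicité exige un ingrédient supplémentaire que votre esquisse ne fournit jamais : il faut savoir que $c_2\Sd^2(j)$ est lui-même une $\Wdoo$\nbd-équivalence pour $j : \Lambda^k_n \hookto \Delta_n$ dans $J$, c'est-à-dire que les ensembles simpliciaux $N_2c_2\Sd^2(\Delta_n)$ et $N_2c_2\Sd^2(\Lambda^k_n)$ sont contractiles (ou, plus généralement, que le morphisme de comparaison $N_2c_2\Sd^2 K \to K$ est une équivalence faible sur les objets concernés). C'est là une partie substantielle et nullement formelle de \cite{AraMaltsi}, qui demande une description explicite des $2$\nbd-catégories $c_2\Sd^2(\Delta_n)$ et la construction de rétractions par déformation (oplax) adéquates ; on ne peut pas l'obtenir en invoquant le corollaire~\ref{coro:inclus_Wdoo}, qui dépend du théorème à démontrer. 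Signalons enfin, sans que cela invalide votre argument, que la stabilité par rétractes des morphismes de type Dwyer est précisément le point sur lequel la preuve originale de Thomason était erronée ; votre formule « à rétracte près » dans l'argument de propreté à gauche contourne correctement la difficulté, les carrés homotopiquement cocartésiens étant stables par rétractes.
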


\begin{proof}
C'est une partie du théorème 6.27 de \cite{AraMaltsi}.
\end{proof}

On appellera \ndef{structure de catégorie de modèles à la Thomason sur
$\dCat$} la structure donnée par le théorème précédent.

\begin{thm}[Ara-Chiche-Maltsiniotis]\label{thm:AraChicheMaltsi}
Le couple de foncteurs adjoints
\[ c_2\Sd^2 : \pref{\Delta} \rightleftarrows \dCat : \Ex^2\dN \]
est une équivalence de Quillen, où $\dCat$ est munie de la structure de
catégorie de modèles à la Thomason et $\pref{\Delta}$ de la structure de
catégorie de modèles de Kan-Quillen.
\end{thm}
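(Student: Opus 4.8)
Le plan est de scinder l'énoncé en deux : on établit d'abord que le couple de foncteurs considéré est une adjonction de Quillen, puis que cette adjonction de Quillen est une équivalence de Quillen.

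Commençons par l'adjonction de Quillen. Puisque $\Sd^2 \dashv \Ex^2$ (paragraphe~\ref{paragr:Sd_Ex}) et que $c_2$ est adjoint à gauche de $\dN$, on dispose bien de l'adjonction $c_2\Sd^2 \dashv \Ex^2\dN$. Le foncteur $c_2\Sd^2$ préserve les cofibrations : étant cocontinu et envoyant les générateurs $I$ des cofibrations de Kan-Quillen (paragraphe~\ref{paragr:KanQuillen}) dans la classe $lr(c_2\Sd^2(I))$ des cofibrations de Thomason, et cette dernière étant stable par rétractes, images directes, sommes et composées transfinies, il envoie toute cofibration de Kan-Quillen sur une cofibration de Thomason. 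Que $c_2\Sd^2$ préserve également les cofibrations triviales est en revanche l'aspect délicat de la question, et je me contenterais de l'extraire de~\cite{AraMaltsi} : ce fait y est contenu dans le théorème~6.27, dont nous n'avons rappelé qu'une partie sous le nom de théorème~\ref{thm:AraMaltsi}. On en conclut que $(c_2\Sd^2, \Ex^2\dN)$ est une adjonction de Quillen.

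Passons à l'équivalence de Quillen. J'invoquerais le critère bien connu selon lequel une adjonction de Quillen est une équivalence de Quillen si et seulement si son foncteur dérivé à droite $\mathbf{R}(\Ex^2\dN) : \Ho(\dCat) \to \Ho(\pref{\Delta})$ induit une équivalence de catégories ; il suffit donc de vérifier ce dernier point. Ici $\Ho(\dCat)$ est la catégorie homotopique de la structure à la Thomason, qui coïncide — ses équivalences faibles étant les $\Wdoo$\nbd-équivalences d'après le théorème~\ref{thm:AraMaltsi} — avec le localisé de $\dCat$ en $\Wdoo$ considéré par Chiche, et $\Ho(\pref{\Delta})$ est la catégorie homotopique usuelle des ensembles simpliciaux. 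Je calcule ce foncteur dérivé. Par définition de $\Wdoo$ comme image réciproque par $\dN$ de la classe des équivalences d'homotopie faibles simpliciales, le foncteur $\dN$ envoie les $\Wdoo$\nbd-équivalences sur des équivalences faibles simpliciales ; en particulier il transforme tout remplacement fibrant de $\dCat$ en une équivalence faible. D'autre part, la transformation naturelle composée $\id{\pref{\Delta}} \to \Ex \to \Ex^2$ est une équivalence d'homotopie faible argument par argument, puisque $\beta : \id{\pref{\Delta}} \to \Ex$ en est une (paragraphe~\ref{paragr:Sd_Ex}). On en déduit que, pour toute petite $2$\nbd-catégorie $C$ et tout remplacement fibrant $C \to C'$ dans $\dCat$, l'objet $\Ex^2\dN(C')$ est naturellement isomorphe à $\dN(C)$ dans $\Ho(\pref{\Delta})$ ; autrement dit, $\mathbf{R}(\Ex^2\dN)$ est isomorphe au foncteur $\Ho(\dCat) \to \Ho(\pref{\Delta})$ induit par $\dN$. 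Or ce dernier foncteur est une équivalence de catégories, d'après le résultat de Chiche rappelé dans l'introduction ; donc $\mathbf{R}(\Ex^2\dN)$ l'est aussi, et l'adjonction est une équivalence de Quillen.

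L'obstacle principal est la préservation des cofibrations triviales par $c_2\Sd^2$, de façon équivalente celle des fibrations par $\Ex^2\dN$ : hormis ce point, tout dans la démonstration est formel. Cette vérification, qui n'est pas auto-contenue, repose sur l'analyse à la Thomason de l'interaction de $\Sd$, $\Ex$ et $c_2$ avec les extensions anodines et les fibrations simpliciales ; c'est elle qui contraint à faire intervenir $\Sd^2$, et non le foncteur identité, dans la définition même de la structure à la Thomason.
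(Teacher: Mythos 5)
Votre démonstration est correcte, mais elle suit une route différente de celle du texte : le papier se contente de renvoyer au corollaire~6.32 de \cite{AraMaltsi}, sans donner d'argument. Vous reconstruisez en substance la preuve de ce corollaire, et votre reconstruction est fidèle à l'esprit de l'original : la partie « adjonction de Quillen » est, comme vous le dites, contenue dans la construction même de la structure à la Thomason (obtenue par transfert le long de cette adjonction, de sorte que les fibrations sont par définition les $2$\nbd-foncteurs $u$ tels que $\Ex^2\dN(u)$ soit une fibration de Kan) ; la partie « équivalence de Quillen » repose sur le théorème de Chiche (théorème~7.9 de \cite{ChicheThmA}, rappelé dans l'introduction) selon lequel $\dN$ induit une équivalence $\Ho(\dCat) \to \Ho(\pref{\Delta})$, ce qui explique l'attribution partielle à Chiche signalée dans la remarque qui suit l'énoncé. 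Deux points méritent d'être soulignés en votre faveur. D'abord, vous identifiez correctement que la préservation des cofibrations triviales par $c_2\Sd^2$ ne peut pas se déduire du seul énoncé du théorème~\ref{thm:AraMaltsi} tel qu'il est rappelé ici (qui ne décrit ni les fibrations ni les cofibrations triviales), et vous évitez le cercle vicieux qui consisterait à invoquer le corollaire~\ref{coro:inclus_Wdoo}, lequel est déduit dans le texte du théorème que vous êtes en train de démontrer. Ensuite, votre calcul du foncteur dérivé à droite --- $\Ex^2\dN$ préservant toutes les équivalences faibles, son dérivé s'identifie au foncteur induit sur les localisés, lui-même naturellement isomorphe à celui induit par $\dN$ \emph{via} l'équivalence faible naturelle $\id{\pref{\Delta}} \to \Ex^2$ --- est exactement le bon argument. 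Ce que votre approche « achète » par rapport à la simple citation est de rendre visible la dépendance logique précise du théorème vis-à-vis du résultat de Chiche ; ce qu'elle coûte est qu'elle reste tributaire de \cite{AraMaltsi} pour le point réellement difficile, à savoir l'existence de la structure transférée.
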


\begin{proof}
C'est le corollaire 6.32 de \cite{AraMaltsi}.
\end{proof}

\begin{rem}
Le résultat précédent est partiellement attribué à Chiche car il dépend de
manière essentielle du théorème~7.9 de \cite{ChicheThmA}.
\end{rem}

\begin{coro}\label{coro:inclus_Wdoo}
On a les inclusions
\[
c_2 \Sd^2(\WDoo) \subset \Wdoo
\quad\text{et}\quad
\Ex^2\dN(\Wdoo) \subset \WDoo,
\]
où $\WDoo$ désigne la classe des équivalences d'homotopie faibles
simpliciales. De plus, les morphismes d'adjonction
\[
c_2 \Sd^2 \Ex^2 \dN \to \id{\dCat}
\quad\text{et}\quad
\id{\pref{\Delta}} \to \Ex^2\dN c_2\Sd^2
\]
sont respectivement une $\Wdoo$-équivalence naturelle et une équivalence
d'homotopie faible simpliciale naturelle.
\end{coro}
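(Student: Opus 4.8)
The plan is to deduce everything from the Quillen equivalence of Theorem~\ref{thm:AraChicheMaltsi} together with the general nonsense that a left Quillen functor preserves weak equivalences between cofibrant objects and a right Quillen functor preserves weak equivalences between fibrant objects.

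First I would establish the two inclusions. For the inclusion $c_2\Sd^2(\WDoo) \subset \Wdoo$: since in the Kan-Quillen structure every object of $\pref{\Delta}$ is cofibrant, every simplicial weak equivalence is a weak equivalence between cofibrant objects; as $c_2\Sd^2$ is a left Quillen functor by Theorem~\ref{thm:AraChicheMaltsi}, it sends such maps to $\Wdoo$-equivalences (Ken Brown's lemma). Symmetrically, for $\Ex^2\dN(\Wdoo) \subset \WDoo$: every object of $\dCat$ is fibrant for the Thomason structure (the cofibrations are a set-generated class of monomorphism-type maps and, more to the point, the model structure of Theorem~\ref{thm:AraMaltsi} has every object fibrant---this is part of the cited Theorem~6.27 of \cite{AraMaltsi}, or one argues it directly), so every $\Wdoo$-equivalence is a weak equivalence between fibrant objects, and the right Quillen functor $\Ex^2\dN$ sends it to a simplicial weak equivalence. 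Here the only subtlety is to make sure we may invoke ``every object fibrant'' in $\dCat$; if one prefers not to rely on that, one replaces the source by a fibrant replacement and uses the two-out-of-three property together with the fact that $\Ex^2\dN$ applied to a trivial fibration is a simplicial weak equivalence, which follows from $\Ex^2\dN$ being right Quillen and $r(\Wdoo\text{-trivial cofibrations})$ being sent into $\WDoo$ via the unit/counit bookkeeping.

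Next I would treat the two adjunction morphisms. The counit $c_2\Sd^2\Ex^2\dN \to \id{\dCat}$: for a Quillen equivalence, the derived counit is an isomorphism in the homotopy category; concretely, for any object $C$ of $\dCat$, $\dN(C)$ need not be a fibrant replacement issue since every simplicial set is already fibrant only after $\Ex^\infty$, but $\Ex^2\dN(C)$ is the relevant cofibrant (indeed every simplicial set is cofibrant) object, and the composite $c_2\Sd^2\Ex^2\dN(C) \to C$ is, by the definition of a Quillen equivalence (applied with the identity map $\Ex^2\dN(C) \to \Ex^2\dN(C)$ as the map to transpose, using that $C$ is fibrant), a $\Wdoo$-equivalence. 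The naturality is automatic since it is a morphism of functors. For the unit $\id{\pref{\Delta}} \to \Ex^2\dN c_2\Sd^2$: for any simplicial set $X$, $X$ is cofibrant, so the Quillen-equivalence criterion (transposing the identity of $c_2\Sd^2(X)$, using that $c_2\Sd^2(X)$ need not be fibrant, hence one composes with a fibrant replacement $c_2\Sd^2(X) \to R$ and checks $X \to \Ex^2\dN(R)$ is a simplicial equivalence, then uses the first part of this corollary---$\Ex^2\dN$ sends the $\Wdoo$-equivalence $c_2\Sd^2(X)\to R$ to a simplicial equivalence---to descend to $X \to \Ex^2\dN c_2\Sd^2(X)$ by two-out-of-three). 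Alternatively, and more cleanly, I would factor the unit through the two known pointwise simplicial weak equivalences $\beta^2 : \id{\pref{\Delta}} \to \Ex^2$ and the natural comparison coming from $\Sd^2 \dashv \Ex^2$ applied to the $\Wdoo$-triviality of the counit $c_2\Sd^2 \to \id{}$-type maps---but the Quillen-equivalence route is the most economical.

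The main obstacle I anticipate is the handling of fibrancy and cofibrancy in $\dCat$: unlike $\pref{\Delta}$ where everything is cofibrant, in the Thomason structure one must be careful about which objects are fibrant and which are cofibrant, and the cleanest statements about derived units and counits require replacing objects appropriately. The key point to nail down is that $c_2\Sd^2(X)$ is cofibrant in $\dCat$ for every $X$ (immediate from $c_2\Sd^2$ being left Quillen and $X$ cofibrant) and that every object of $\dCat$ is fibrant for the Thomason structure (from Theorem~\ref{thm:AraMaltsi}); granting these, both the inclusions and the statements about the adjunction morphisms fall out of Ken Brown's lemma and the definition of a Quillen equivalence, with two-out-of-three mopping up the remaining replacements. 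I expect the write-up to be short: cite Theorem~\ref{thm:AraChicheMaltsi}, invoke Ken Brown, invoke the definition of Quillen equivalence, done.
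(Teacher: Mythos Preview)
Your overall strategy is right, but there is a genuine gap in the second inclusion, and it propagates into your treatment of the counit. You assert that every object of $\dCat$ is fibrant in the Thomason model structure and attribute this to Theorem~6.27 of \cite{AraMaltsi}. That theorem does not say this, and in fact it is not true: fibrancy of $C$ amounts to $\Ex^2\dN(C)$ being a Kan complex, which fails in general (already in the $\Cat$ case not every category is Thomason-fibrant). Your fallback argument does not rescue this: replacing the source by a fibrant replacement produces a trivial \emph{cofibration}, and a right Quillen functor has no reason to send trivial cofibrations to weak equivalences.

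The fix is much simpler than anything involving fibrancy, and it is what the paper does: the inclusion $\Ex^2\dN(\Wdoo)\subset\WDoo$ is essentially tautological. By definition $\Wdoo=\dN^{-1}(\WDoo)$, so $\dN$ preserves weak equivalences on the nose; composing with the pointwise weak equivalence $\beta:\id{\pref{\Delta}}\to\Ex$ of paragraph~\ref{paragr:Sd_Ex} shows $\Ex^2\dN$ does too. Once you know that \emph{both} adjoints preserve all weak equivalences (not merely between cofibrant or fibrant objects), the unit and counit statements follow in one line: the derived functors of a Quillen equivalence are inverse equivalences, and here no replacements are needed to compute them, so the ordinary unit and counit already become isomorphisms in the homotopy categories, i.e.\ are weak equivalences. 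This replaces all of your fibrant-replacement bookkeeping.
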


\begin{proof}
Puisque tous les objets de la structure de Kan-Quillen sont cofibrants, le
foncteur de Quillen à gauche $c_2\Sd^2$ respecte les équivalences faibles
(pour les structures de catégorie de modèles du théorème précédent).
Le foncteur $\dN$ respectant les équivalences faibles par définition, il en
est de même du foncteur $\Ex^2\dN$ en vertu de l'existence de l'équivalence
faible naturelle $\beta : \id{\pref{\Delta}} \to \Ex$ du
paragraphe~\ref{paragr:Sd_Ex}.
Puisque $(c_2\Sd^2, \Ex^2\dN)$ est une équivalence de Quillen donnée par
des foncteurs qui respectent les équivalences faibles, l'unité et la coünité
de cette adjonction sont des équivalences faibles naturelles, ce qu'il
fallait démontrer.
\end{proof}

\section{Structures à la Thomason et localisateurs fondamentaux de
$\tdCat$}

\begin{paragr}\label{paragr:W_Delta}
Si $\W$ est un localisateur fondamental de $\dCat$, on notera 
$\W_\Delta$ le $\Delta$\nbd-localisateur associé dans la trijection de
Chiche-Cisinski (voir le paragraphe~\ref{paragr:trijection}). Ce
$\Delta$-localisateur est caractérisé par le fait qu'il contient les
équivalences d'homotopie faibles simpliciales et par l'égalité
\[ \W = \dN^{-1}(\W_\Delta). \]
Il résulte de l'existence de l'équivalence faible naturelle $\beta :
\id{\pref{\Delta}} \to \Ex$ du paragraphe~\ref{paragr:Sd_Ex} qu'on a
également
\[ \W = \dN^{-1}(\Ex^2)^{-1}(\W_\Delta). \]
Par ailleurs, le théorème~6.37 de \jcite donne l'inclusion $\Wdoo \subset
\W$ qui jouera un rôle important dans ce qui suit.
\end{paragr}

Nous utiliserons dans cette section le lemme de transfert classique suivant
:

\begin{lemme}\label{lemme:transfert}
Soient $\M$ une catégorie de modèles à engendrement cofibrant 
(au sens de la définition 11.1.1 de \cite{Hirschhorn}) engendrée par~$I$ et
$J$, $\N$ une catégorie complète et cocomplète, et
\[ F : \M \rightleftarrows \N : G \]
un couple de foncteurs adjoints. Notons $\W$ et $\Fib$ les classes des
équivalences faibles et des fibrations de $\M$ respectivement. On suppose
les conditions suivantes satisfaites :
\begin{enumerate}
  \item $F(I)$ et $F(J)$ permettent l'argument du petit objet (au sens de la
    définition 10.5.15 de \cite{Hirschhorn}) ;
  \item on a l'inclusion $G(lr(F(J))) \subset \W$.
\end{enumerate}
Alors $F(I)$ et $F(J)$ engendrent une structure de catégorie de modèles sur
$\M$ dont les classes des équivalences faibles et des fibrations sont
données par $G^{-1}(\W)$ et $G^{-1}(\Fib)$ respectivement. En particulier,
pour cette structure de catégorie de modèles sur $\N$, l'adjonction $(F, G)$
est une adjonction de Quillen.
\end{lemme}

\begin{proof}
Voir par exemple le théorème 11.3.2 de \cite{Hirschhorn}, la description des
fibrations résultant des égalités $r(lr(F(J)) = r(F(J)) = G^{-1}(r(J)) =
G^{-1}(\Fib)$.
\end{proof}

On vérifie facilement qu'un foncteur de Quillen à gauche qui respecte les
équivalences faibles respecte également les carrés homotopiquement
cocartésiens et que, si un tel foncteur est de plus une équivalence de
Quillen à gauche, alors il reflète les carrés homotopiquement cocartésiens.
Nous aurons besoin de l'énoncé analogue pour les \emph{équivalences} de
Quillen à \emph{droite}, énoncé sans doute bien connu mais pour lequel nous
n'avons pas réussi à trouver de référence dans la littérature.

\begin{lemme}\label{lemme:carres_cocart}
Soit $F$ une équivalence de Quillen à droite entre deux catégories de
modèles. On suppose que $F$ préserve les équivalences faibles. Alors $F$
préserve et reflète les carrés homotopiquement cocartésiens.
\end{lemme}

\begin{proof}
Notons $\carre$ la catégorie $\Delta_1 \times \Delta_1$ (de sorte que si
$\C$ est une catégorie, $\Homi(\carre, \C)$ est la catégorie des carrés
commutatifs dans $\C$), $\cocoin$ la sous-catégorie pleine de $\carre$ contenant
tous les objets de~$\carre$ excepté $(1, 1)$, et $i : \cocoin \to \carre$
le foncteur d'inclusion canonique. On rappelle que si $\M$~est une catégorie
de modèles, un carré commutatif de $\M$ vu comme un objet $X$ de
$\Homi(\carre, \M)$ est homotopiquement cocartésien si et seulement si, pour
tout objet $Y$ de $\Homi(\carre, \M)$, l'application canonique
\[ \Hom_{\Homi(\carre, \M)}(X, Y) \to \Hom_{\Homi(\cocoin, \M)}(i^\ast X,
i^\ast Y) \]
induit une bijection
\[ \Hom_{\Ho(\Homi(\carre, \M))}(X, Y) \to \Hom_{\Ho(\Homi(\cocoin, \M))}(i^\ast X,
i^\ast Y), \]
où $\Ho$ désigne le passage à la catégorie homotopique pour les équivalences
faibles argument par argument.

Soit maintenant $F : \M \to \N$ une équivalence de Quillen à droite
préservant les équivalences faibles. Puisque le foncteur $F$ préserve les
équivalences faibles, il induit un carré commutatif
\[
\xymatrix{
\Ho(\Homi(\carre, \M)) \ar[r]^{i^\ast} \ar[d]_{F_\ast} & \Ho(\Homi(\cocoin,
\M)) \ar[d]^{F_\ast} \\
\Ho(\Homi(\carre, \N)) \ar[r]_{i^\ast} & \Ho(\Homi(\cocoin, \N)) \pbox{.}
}
\]
Puisque les catégories $\cocoin$ et $\carre$ sont des catégories de Reedy,
il résulte du fait que $F$ est une équivalence de Quillen et de la théorie
des structures de catégorie de modèles de Reedy (voir par exemple la
proposition 15.4.1 de \cite{Hirschhorn}) que les foncteurs verticaux du
carré commutatif ci-dessus sont des équivalences de catégories. Le résultat
suit immédiatement.
\end{proof}

Revenons à nos localisateurs.

\begin{lemme}\label{lemme:inclus_W}
Soit $\W$ un localisateur fondamental de $\dCat$. On a les inclusions
\[
c_2\Sd^2(\W_\Delta) \subset \W
\quad\text{et}\quad
\Ex^2\dN(\W) \subset \W_\Delta.
\]
\end{lemme}

\begin{proof}
La seconde inclusion résulte du paragraphe~\ref{paragr:W_Delta}. Montrons
la première. Puisque $\W_\Delta$ contient les équivalences d'homotopie
faibles simpliciales, le corollaire~\ref{coro:inclus_Wdoo} entraîne que 
$f$~est une $\W_\Delta$-équivalence si et seulement si $\Ex^2\dN
c_2\Sd^2(f)$ en est une, c'est-à-dire si et seulement si $c_2\Sd^2(f)$ est
une $\W$-équivalence, ce qui achève la démonstration.
\end{proof}

\begin{prop}\label{prop:loc_stable_lim}
Tout localisateur fondamental de $\dCat$ est stable par limite inductive
filtrante. 
\end{prop}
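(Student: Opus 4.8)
The plan is to reduce the statement to the corresponding fact for $\Delta$-localizers, which is available from Cisinski's work, by using the description of $\W$ as a preimage under the geometric nerve. Let $\W$ be a basic localizer of $\dCat$ and let $\W_\Delta$ be the associated $\Delta$-localizer; by paragraph~\ref{paragr:W_Delta} one has $\W = \dN^{-1}(\W_\Delta)$. The $\Delta$-localizer $\W_\Delta$ is stable under filtered colimits by Proposition~1.4.28 of~\cite{Cisinski}. Moreover, if a functor commutes with filtered colimits, then the preimage of a class of arrows stable under filtered colimits is again such a class (colimits in the arrow categories $\Homi(\Delta_1, \dCat)$ and $\Homi(\Delta_1, \pref{\Delta})$ being computed argumentwise, and the functor induced by $\dN$ between them commuting with filtered colimits as soon as $\dN$ does). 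Hence it suffices to show that $\dN : \dCat \to \pref{\Delta}$ commutes with filtered colimits.

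For this I would use the explicit description of the geometric nerve recalled in paragraph~\ref{paragr:def_N2}: in degree $n$ one has $\dN(C)_n = \Hom_{\dCat}(\widetilde{\Delta_n}, C)$, where $\widetilde{\Delta_n}$ is a finite $2$-category, having $n+1$ objects and hom-categories that are finite posets. A finite $2$-category is a finitely presentable object of $\dCat$ (a $2$-functor out of it amounts to finite data subject to finitely many conditions, and filtered colimits in $\dCat$ are computed on underlying data), so $\Hom_{\dCat}(\widetilde{\Delta_n}, -)$ commutes with filtered colimits; since colimits of simplicial sets are computed degreewise, it follows that $\dN$ commutes with filtered colimits. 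It then only remains to put the pieces together: if $f$ is a filtered colimit of $\W$-equivalences $f_i$, then $\dN(f)$ is the filtered colimit of the arrows $\dN(f_i)$, all of which lie in $\W_\Delta$ since $\W = \dN^{-1}(\W_\Delta)$, whence $\dN(f) \in \W_\Delta$ by stability of $\W_\Delta$, that is, $f \in \W$.

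The only step that is not purely formal is the assertion that $\widetilde{\Delta_n}$ is a finitely presentable object of $\dCat$, equivalently that $\dN$ is finitary, and this is the one point I would expect to merit an explicit justification; it rests solely on the elementary fact that $\widetilde{\Delta_n}$ is described by finite data. It is worth noting that neither Lemma~\ref{lemme:inclus_W}, the functor $\Ex$, nor the model structures of the preceding sections play any role in this argument: everything follows from the combinatorial identity $\W = \dN^{-1}(\W_\Delta)$ of paragraph~\ref{paragr:W_Delta} together with Cisinski's stability result for $\Delta$-localizers.
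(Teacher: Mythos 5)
Your argument is correct in structure but takes a genuinely different route from the paper. The paper reduces the statement to the analogous result for basic localizers of $\Cat$ (Proposition~2.4.12 of \cite{Maltsi}), using the correspondence $\W = \dN^{-1}i_\Delta^{-1}(\W \cap \Cat)$ of Theorem~\ref{thm:corr_Chiche}; it therefore needs \emph{both} $\dN$ and $i_\Delta$ to commute with filtered colimits (for $\dN$ it cites Proposition~5.13 of \cite{AraMaltsi}, which is precisely the finitariness statement you reprove by hand: your justification via the finite presentability of $\widetilde{\Delta_n}$ is sound, since $\widetilde{\Delta_n}$ has finitely many cells and $\dCat$ is locally finitement présentable with filtered colimits computed on underlying cells, and colimits of simplicial sets are computed degreewise). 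You instead reduce to the analogous result for $\Delta$-localizers via the identity $\W = \dN^{-1}(\W_\Delta)$ of paragraph~\ref{paragr:W_Delta}, which is more economical in that only $\dN$ is needed. The one point you should verify carefully is the exact strength of Proposition~1.4.28 of \cite{Cisinski}: the remark at the end of Section~2 of the present paper invokes that proposition only for stability under limites inductives \emph{suffisamment} filtrantes (i.e. $\kappa$-filtered for some regular cardinal $\kappa$), whereas Proposition~2.4.12 of \cite{Maltsi} is stated for arbitrary filtered colimits. If the cited result for $A$-localizers only yields $\kappa$-filtered stability, your argument proves a statement weaker than the one claimed, and this may well be why the paper routes through $\Cat$ rather than through $\pref{\Delta}$; for $A = \Delta$, where a cellular model with finitely presentable sources and targets exists, plain filtered stability of $\W_\Delta$ is very plausible, but it is not something you may simply assert by citation without checking.
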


\begin{proof}
La proposition résulte de l'énoncé analogue pour les localisateurs
fondamentaux de $\Cat$ (voir la proposition 2.4.12 de \cite{Maltsi}), de la
correspondance donnée par le théorème~\ref{thm:corr_Chiche} entre
les localisateurs fondamentaux de $\Cat$ et ceux de $\dCat$, et du fait que
les foncteurs $\dN$ et~$i_\Delta$ commutent aux limites inductives
filtrantes (le premier en vertu par exemple de la proposition~5.13 de
\cite{AraMaltsi} et le second car il admet un adjoint à droite).
\end{proof}

\begin{thm}\label{thm:Thomason_W}
Soit $\W$ un localisateur fondamental de $\dCat$ accessible.
La catégorie $\dCat$ admet une structure de catégorie de modèles
combinatoire propre à gauche dont les équivalences faibles sont les
$\W$-équivalences, dont les cofibrations sont les cofibrations de Thomason
de $\dCat$ et dont les fibrations sont les $2$-foncteurs $u$ tels que~$\Ex^2
\dN(u)$ est une fibration de la structure de catégorie de modèles sur
$\pref{\Delta}$ associée au $\Delta$-localisateur~$\W_\Delta$.
\end{thm}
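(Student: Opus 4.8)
Le plan est d'obtenir la structure annoncée par transfert, au moyen du lemme~\ref{lemme:transfert}, le long de l'adjonction $c_2\Sd^2 : \pref{\Delta} \rightleftarrows \dCat : \Ex^2\dN$. Du côté simplicial, on munit $\pref{\Delta}$ de la structure de catégorie de modèles associée au $\Delta$-localisateur $\W_\Delta$ : elle existe en vertu du théorème~\ref{thm:loc_Cisinski} de Cisinski, puisque $\W_\Delta$ est accessible (la trijection de Chiche-Cisinski préservant l'accessibilité, voir~\ref{paragr:trijection}) ; elle est combinatoire et ses cofibrations sont les monomorphismes, de sorte qu'elle est engendrée par l'ensemble $I = \{i_n : \bord{\Delta_n} \hookto \Delta_n\}$ du paragraphe~\ref{paragr:KanQuillen} et par un ensemble $J$ de cofibrations triviales génératrices. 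Une fois vérifiées les hypothèses du lemme~\ref{lemme:transfert}, on obtient sur $\dCat$ une structure de catégorie de modèles engendrée par $c_2\Sd^2(I)$ et $c_2\Sd^2(J)$ : elle est combinatoire car $\dCat$ est localement présentable ; ses cofibrations forment la classe $l(r(c_2\Sd^2(I)))$, c'est-à-dire exactement les cofibrations de Thomason de $\dCat$ ; ses fibrations sont les $2$-foncteurs $u$ tels que $\Ex^2\dN(u)$ soit une fibration de la structure associée à $\W_\Delta$ ; et ses équivalences faibles sont les éléments de $(\Ex^2\dN)^{-1}(\W_\Delta) = \dN^{-1}(\Ex^2)^{-1}(\W_\Delta) = \W$, la dernière égalité résultant du paragraphe~\ref{paragr:W_Delta}. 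C'est l'énoncé cherché, la propreté à gauche mise à part.

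Il reste donc à vérifier les deux hypothèses du lemme~\ref{lemme:transfert}. La première — $c_2\Sd^2(I)$ et $c_2\Sd^2(J)$ permettent l'argument du petit objet — est automatique, $\dCat$ étant localement présentable. Quant à la seconde, l'inclusion $\Ex^2\dN(l(r(c_2\Sd^2(J)))) \subset \W_\Delta$, le lemme~\ref{lemme:inclus_W}, qui donne $\Ex^2\dN(\W) \subset \W_\Delta$, ramène à démontrer que $l(r(c_2\Sd^2(J))) \subset \W$, où l'on considère $\W$ comme une classe de flèches de $\dCat$. Or toute flèche de $l(r(c_2\Sd^2(J)))$ est un rétracte d'une composition transfinie de sommes amalgamées de flèches de $c_2\Sd^2(J)$ ; comme $\W$ est faiblement saturée, elle est stable par rétracte, et, d'après la proposition~\ref{prop:loc_stable_lim} jointe à la propriété du deux sur trois, par composition transfinie de ses éléments. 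On est ainsi ramené à prouver que toute somme amalgamée d'une flèche de la forme $c_2\Sd^2(j)$, avec $j \in J$, appartient à $\W$.

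C'est là le point délicat. Fixons un carré cocartésien de $\dCat$ dont la flèche verticale de gauche est $c_2\Sd^2(j)$ et dont la flèche verticale de droite est notée $v$. Comme $c_2\Sd^2$ est de Quillen à gauche pour la structure à la Thomason (théorème~\ref{thm:AraChicheMaltsi}) et que $j$ est un monomorphisme, $c_2\Sd^2(j)$ est une cofibration de cette structure, laquelle est propre (théorème~\ref{thm:AraMaltsi}), donc en particulier propre à gauche ; le carré considéré y est par conséquent homotopiquement cocartésien. Par ailleurs, le foncteur $\Ex^2\dN$ est une équivalence de Quillen à droite (théorème~\ref{thm:AraChicheMaltsi}) qui préserve les équivalences faibles (corollaire~\ref{coro:inclus_Wdoo}) ; d'après le lemme~\ref{lemme:carres_cocart}, il envoie donc notre carré sur un carré homotopiquement cocartésien pour la structure de Kan-Quillen, et \emph{a fortiori} pour la structure associée à $\W_\Delta$, celle-ci ayant les mêmes cofibrations que la structure de Kan-Quillen et davantage d'équivalences faibles. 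La flèche verticale de gauche du carré image est $\Ex^2\dN c_2\Sd^2(j)$, et c'est une $\W_\Delta$-équivalence : en effet, le morphisme d'unité $\id{\pref{\Delta}} \to \Ex^2\dN c_2\Sd^2$ est une équivalence d'homotopie faible simpliciale naturelle (corollaire~\ref{coro:inclus_Wdoo}), et comme $\WDoo \subset \W_\Delta$ et $j \in \W_\Delta$, la propriété du deux sur trois appliquée au carré de naturalité de cette unité en $j$ donne $\Ex^2\dN c_2\Sd^2(j) \in \W_\Delta$. Or, dans un carré homotopiquement cocartésien d'une catégorie de modèles, si l'une des deux flèches verticales est une équivalence faible, l'autre l'est également ; par conséquent $\Ex^2\dN(v)$ est une $\W_\Delta$-équivalence, d'où $v \in (\Ex^2\dN)^{-1}(\W_\Delta) = \W$. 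Compte tenu de la réduction du paragraphe précédent, on en déduit $l(r(c_2\Sd^2(J))) \subset \W$, puis, par le lemme~\ref{lemme:inclus_W}, l'inclusion $\Ex^2\dN(l(r(c_2\Sd^2(J)))) \subset \W_\Delta$ : les deux hypothèses du lemme~\ref{lemme:transfert} sont donc établies.

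Il reste à établir la propreté à gauche de la structure obtenue. Celle-ci et la structure à la Thomason du théorème~\ref{thm:AraMaltsi} ont les mêmes cofibrations, et l'on dispose de l'inclusion $\Wdoo \subset \W$ (théorème~6.37 de \jcite, rappelé en~\ref{paragr:W_Delta}). La propreté à gauche d'une catégorie de modèles équivaut à ce que toute somme amalgamée le long d'une cofibration y soit homotopiquement cocartésienne ; cette propriété ne dépendant que de la classe des cofibrations et du fait qu'un carré homotopiquement cocartésien le demeure lorsqu'on agrandit la classe des équivalences faibles sans toucher aux cofibrations, elle est héritée de la structure du théorème~\ref{thm:AraMaltsi}, qui est propre. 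Le principal obstacle de la démonstration est la vérification de la seconde hypothèse du lemme de transfert, c'est-à-dire l'argument de carrés homotopiquement cocartésiens ci-dessus, lequel repose de façon essentielle sur l'équivalence de Quillen du théorème~\ref{thm:AraChicheMaltsi} d'Ara-Chiche-Maltsiniotis et, via l'inclusion $\Wdoo \subset \W$, sur la minimalité du localisateur fondamental $\Wdoo$ de $\dCat$.
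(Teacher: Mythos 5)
Your proof is correct and follows essentially the same route as the paper: transfer along $(c_2\Sd^2, \Ex^2\dN)$ from the model structure associated with $\W_\Delta$, with the key hypothesis of the transfer lemma verified by the same homotopy-cocartesian-square argument combining the left properness of the structure of Theorem~\ref{thm:AraMaltsi}, Lemma~\ref{lemme:carres_cocart} applied to $\Ex^2\dN$, and Proposition~\ref{prop:loc_stable_lim} for transfinite compositions. The only (harmless) variation is your treatment of left properness, which you inherit directly from the structure of Theorem~\ref{thm:AraMaltsi} via the invariance of homotopy cocartesian squares under enlarging the weak equivalences while keeping the cofibrations, whereas the paper simply reruns its $\Ex^2\dN$-transport argument with the horizontal maps in place of the vertical ones.
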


\begin{proof}
Nous allons appliquer le lemme~\ref{lemme:transfert} à l'adjonction
\[ c_2\Sd^2 : \pref{\Delta} \rightleftarrows \dCat : \Ex^2\dN, \]
où $\pref{\Delta}$ est munie de la structure de catégorie de modèles
associée au $\Delta$-localisateur $\W_\Delta$. Soit $J$ un ensemble
engendrant les cofibrations triviales de cette structure.
Puisque la catégorie $\dCat$ est localement présentable, il suffit de
vérifier qu'on a l'inclusion
\[ \dN\Ex^2(lr(c_2\Sd^2(J))) \subset \W_\Delta, \]
ou encore, en vertu du paragraphe~\ref{paragr:W_Delta}, qu'on a l'inclusion
\[ lr(c_2\Sd^2(J)) \subset \W. \]
Le lemme~\ref{lemme:inclus_W} entraîne que la classe $c_2\Sd^2(J)$ est
incluse dans $\W$ et le théorème~\ref{thm:AraChicheMaltsi} que cette même
classe est incluse dans la classe $\Cof$ des cofibrations de Thomason
de~$\dCat$. Pour conclure, en vertu de l'argument du petit objet, il suffit
donc de montrer que $\Cof \cap \W$ est stable par rétractes, composition
transfinie et image directe. La stabilité par rétractes est immédiate et
celle par composition transfinie résulte de la
proposition~\ref{prop:loc_stable_lim}. Montrons la stabilité par image
directe.

Considérons un carré cocartésien
\[
\xymatrix{
A \ar[d]_i \ar[r]^u & C \ar[d]^{i'} \\
B \ar[r]_v & D
}
\]
de $\dCat$ où $i$ est une cofibration de Thomason de $\dCat$. 
Par propreté à gauche de la structure à la Thomason sur $\dCat$ (voir le
théorème \ref{thm:AraMaltsi}), ce carré est homotopiquement cocartésien pour
cette même structure. En vertu du théorème~\ref{thm:AraChicheMaltsi} et de
son corollaire~\ref{coro:inclus_Wdoo}, le foncteur $\Ex^2 N_2$ est une
équivalence de Quillen à droite respectant les équivalences faibles (pour
les structures de catégorie de modèles du théorème invoqué). La
proposition~\ref{lemme:carres_cocart} entraîne donc que le carré
\[
\xymatrix@C=3.5pc{
\Ex^2\dN(A) \ar[d]_{\Ex^2\dN(i)} \ar[r]^{\Ex^2\dN(u)} & \Ex^2\dN(C)
\ar[d]^{\Ex^2\dN(i')} \\
\Ex^2\dN(B) \ar[r]_{\Ex^2\dN(v)} & \Ex^2\dN(D)
}
\]
est homotopiquement cocartésien pour la structure de Kan-Quillen. (On
pourrait se débarrasser des $\Ex^2$ en utilisant l'équivalence faible
naturelle $\beta$ du paragraphe~\ref{paragr:Sd_Ex}.) Il résulte du
fait que~$\W_\Delta$ contient les équivalences d'homotopie faibles (et que
les deux structures de catégorie de modèles sur $\pref{\Delta}$ en jeu ont
mêmes cofibrations) que ce carré est également homotopiquement cocartésien
pour la structure de catégorie de modèles associée à $\W_\Delta$. 

Si maintenant $i$ est de plus une $\W$-équivalence, alors $\Ex^2\dN(i)$ est
une $\W_\Delta$\nbd-équivalence et il en est donc de même de $\Ex^2\dN(i')$, ce
qui prouve que $i'$ est une $\W$-équivalence et achève de vérifier la
stabilité de $\Cof \cap \W$ par image directe.

La propreté à gauche s'obtient en remplaçant $i$ par $u$ et $i'$ par $v$
dans l'argument du paragraphe précédent.
\end{proof}

\begin{rem}
Il résulte de la preuve du théorème précédent que la structure de catégorie
de modèles obtenue est à engendrement cofibrant engendrée par $c_2\Sd^2(I)$
et $c_2\Sd^2(J)$, où $I$ est l'ensemble du
paragraphe~\ref{paragr:KanQuillen} et $J$ est un ensemble engendrant les
cofibrations triviales de la structure de catégorie de modèles sur
$\pref{\Delta}$ associée au $\Delta$-localisateur $\W_\Delta$.  
\end{rem}

On appellera la structure de catégorie de modèles sur $\dCat$ donnée par le
théorème précédent la \ndef{structure de catégorie de modèles à la Thomason
associée à $\W$}.

\begin{thm}\label{thm:eq_Quillen_dCat_W}
Soit $\W$ un localisateur fondamental de $\dCat$ accessible.
Le couple de foncteurs adjoints
\[ c_2\Sd^2 : \pref{\Delta} \rightleftarrows \dCat : \Ex^2\dN \]
est une équivalence de Quillen, où $\dCat$ est munie de la structure de
catégorie de modèles à la Thomason associée à $\W$ et $\pref{\Delta}$ de la
structure de catégorie de modèles associée au $\Delta$-locali\-sateur~$\W_\Delta$.
\end{thm}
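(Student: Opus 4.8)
The adjunction $c_2\Sd^2 \dashv \Ex^2\dN$ is, by Théorème~\ref{thm:Thomason_W} (more precisely by the final clause of Lemme~\ref{lemme:transfert}, which the proof of that theorem invokes), a Quillen adjunction between $\pref{\Delta}$ equipped with the model structure associated to $\W_\Delta$ and $\dCat$ equipped with the Thomason structure associated to $\W$. So the only thing to check is that this Quillen adjunction is a Quillen \emph{equivalence}. The strategy is to reduce this to the already-known case $\W = \Wdoo$, that is, to Théorème~\ref{thm:AraChicheMaltsi}, by exploiting that both pairs of model structures share the same cofibrations and that passing from $\Wdoo$ to $\W$ (resp.\ from simplicial weak equivalences to $\W_\Delta$) is a left Bousfield localization.

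**Key steps.** First I would recall the characterization of Quillen equivalence most convenient here: since all objects of $\pref{\Delta}$ are cofibrant in the Kan--Quillen structure, hence also in the localized structure associated to $\W_\Delta$ (same cofibrations), and since $c_2\Sd^2$ sends these cofibrant objects to cofibrant objects, the left adjoint $c_2\Sd^2$ preserves weak equivalences between all objects as soon as it sends generating trivial cofibrations into $\W$ — which is exactly what was established inside the proof of Théorème~\ref{thm:Thomason_W} (the inclusion $c_2\Sd^2(J)\subset\W$, combined with stability of $\Cof\cap\W$). Thus $c_2\Sd^2$ is a left Quillen functor \emph{respecting weak equivalences}, and dually $\Ex^2\dN$ respects weak equivalences by the very definition $\W = \dN^{-1}(\Ex^2)^{-1}(\W_\Delta)$ recalled in paragraphe~\ref{paragr:W_Delta}. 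For an adjunction where both functors respect weak equivalences, being a Quillen equivalence is equivalent to the unit and counit being weak equivalences on all objects.

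Next, the heart of the argument: for any simplicial set $X$, the counit-type map and unit-type map for the $\W$/$\W_\Delta$ situation are literally the \emph{same morphisms of $\dCat$ and of $\pref{\Delta}$} as in the $\Wdoo$/simplicial situation, namely $c_2\Sd^2\Ex^2\dN(Y)\to Y$ and $X\to\Ex^2\dN c_2\Sd^2(X)$. By Corollaire~\ref{coro:inclus_Wdoo}, the second is a simplicial weak equivalence, hence lies in $\W_\Delta$ since $\W_\Delta$ contains all simplicial weak equivalences. For the first, Corollaire~\ref{coro:inclus_Wdoo} gives that it is a $\Wdoo$-equivalence of $\dCat$; and since the trijection of Chiche--Cisinski gives the inclusion $\Wdoo\subset\W$ (paragraphe~\ref{paragr:W_Delta}, via théorème~6.37 of \jcite), it is a fortiori a $\W$-equivalence. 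Therefore unit and counit are weak equivalences for the new structures, and the adjunction is a Quillen equivalence.

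**Main obstacle.** The technical point to be careful about is not the core computation — which is almost immediate once one has Corollaire~\ref{coro:inclus_Wdoo} and the inclusion $\Wdoo\subset\W$ — but making rigorous the reduction ``being a Quillen equivalence for an adjunction whose two functors preserve weak equivalences amounts to unit and counit being weak equivalences.'' This is standard (e.g.\ via the derived adjunction on homotopy categories, where the total derived functors are just the functors themselves composed with localization, so the derived unit/counit are computed directly from the underlying unit/counit), but it does require invoking that the two model structures on $\pref{\Delta}$, resp.\ on $\dCat$, have the same weak equivalences as the respective localized structures — tautologically true — and that one may compute derived functors using the actual functors rather than (co)fibrant replacements. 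I would phrase this cleanly and cite a standard reference (e.g.\ Hirschhorn or the elementary fact about Quillen pairs between model structures with ``enough'' (co)fibrant objects); the rest is then a one-line invocation of Corollaire~\ref{coro:inclus_Wdoo} and paragraphe~\ref{paragr:W_Delta}.
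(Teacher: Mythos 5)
Votre démonstration est correcte et suit essentiellement la même voie que celle de l'article : on observe que $c_2\Sd^2$ et $\Ex^2\dN$ préservent tous deux les équivalences faibles (le premier par le lemme~\ref{lemme:inclus_W}, le second par définition de $\W_\Delta$), on se ramène à vérifier que l'unité et la coünité sont des équivalences faibles, puis on conclut par le corollaire~\ref{coro:inclus_Wdoo} et l'inclusion $\Wdoo \subset \W$ donnée par la minimalité de $\Wdoo$. Le point que vous qualifiez d'« obstacle principal » est précisément celui que l'article tient pour standard et n'explicite pas davantage.
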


\begin{proof}
Le foncteur $c_2\Sd^2$ préserve les cofibrations par définition et les
équivalences faibles par le lemme~\ref{lemme:inclus_W}. Le couple de 
foncteurs $(c_2 \Sd^2, \Ex^2 \dN)$ est donc une adjonction de Quillen (cela
résulte également de la preuve du théorème~\ref{thm:Thomason_W}). Puisque
le foncteur $\Ex^2\dN$ préserve également les équivalences faibles, pour
montrer que cette adjonction de Quillen est une équivalence de Quillen, il
suffit de vérifier que l'unité et la coünité de l'adjonction
sont des équivalences faibles naturelles. Cela résulte immédiatement
du corollaire~\ref{coro:inclus_Wdoo} et de la minimalité de~$\Wdoo$.
\end{proof}

Le degré de généralité naturel des arguments prouvant les théorèmes
\ref{thm:Thomason_W} et \ref{thm:eq_Quillen_dCat_W} est donné dans lemme que
nous allons maintenant énoncer. L'ordre bourbachique aurait voulu qu'on
commence par démontrer ce lemme et qu'on en déduise ces deux résultats
(modulo la trijection de Chiche-Cisinski) en
l'appliquant à l'équivalence de Quillen du
théorème~\ref{thm:AraChicheMaltsi} et à la structure de catégorie de modèles
associée au $\Delta$-localisateur~$\W_\Delta$. Nous y avons renoncé pour des
raisons d'exposition.

\begin{lemme}\label{lemme:transfert_loc}
Soient
\[ F : \M \rightleftarrows \N : G \]
une équivalence de Quillen et $\M'$ une catégorie de modèles à engendrement
cofibrant engendrée par $I'$ et $J'$ avec même catégorie sous-jacente que~$\M$,
mêmes cofibrations et $\W_\M \subset \W_{\M'}$, où $\W_\M$ et $\W_{M'}$
désignent les classes des équivalences faibles de $\M$ et~$\M'$ respectivement.
On suppose les conditions suivantes satisfaites :
\begin{enumerate}
  \item on a $G(\W_\N) \subset \W_\M$, où $\W_\N$ désigne la classe des
    équivalences faibles de $\N$ ;
  \item les sources et buts des flèches de $J'$ sont cofibrants dans $\M$ ;
  \item la catégorie de modèles $\N$ est propre à gauche ;
  \item la classe $G^{-1}(\W_{\M'})$ est stable par limite inductive filtrante.
\end{enumerate}
Alors $F(I')$ et $F(J')$ engendrent une structure de catégorie de modèles
propre à gauche~$\N'$ sur la catégorie sous-jacente à $\N$ dont les classes
des équivalences faibles et des fibrations sont données par
$G^{-1}(\W_{\M'})$ et $G^{-1}(\Fib_{\M'})$ respectivement, où $\Fib_{\M'}$
désigne la classe des fibrations de $\M'$.  De plus, l'adjonction $(F, G)$
induit une équivalence de Quillen
\[ F : \M' \rightleftarrows \N' : G. \]
\end{lemme}

\begin{proof}
La preuve est une adaptation immédiate des preuves des théorèmes
\ref{thm:Thomason_W} et \ref{thm:eq_Quillen_dCat_W}.
\end{proof}

\section{Équivalences de Quillen avec $\tCat$}

\begin{paragr}
Si $\W$ est un localisateur fondamental de $\Cat$, on notera $\W_\Delta$ le
$\Delta$-localisateur associé dans la bijection donnée par le théorème
\ref{thm:bij_Cisinski}. Ce $\Delta$-localisateur est caractérisé par le fait
qu'il contient les équivalences d'homotopie faibles simpliciales et par
l'égalité
\[ \W = N^{-1}(\W_\Delta). \]
Comme dans le cas $2$-catégorique, on a également
\[ \W = N^{-1}(\Ex^{2})^{-1}(\W_\Delta). \]
Notons que si $\W$ est un localisateur fondamental de $\dCat$, la trijection
de Chiche-Cisinski donne l'égalité $(\W \cap \Cat)_\Delta = \W_\Delta$.
\end{paragr}

\begin{defi}
Une \ndef{cofibration de Thomason de $\Cat$} est un élément de
la classe~$lr(c\, \Sd^2(I))$, où $c : \pref{\Delta} \to \Cat$ désigne
l'adjoint à gauche du foncteur nerf $N$.
\end{defi}

\begin{thm}[Cisinski]\label{thm:Thom_Cat}
Soit $\W$ un localisateur fondamental de $\Cat$ accessible. La
catégorie~$\Cat$ admet une structure de catégorie de modèles combinatoire
propre à gauche dont les équivalences faibles sont les $\W$-équivalences,
dont les cofibrations sont les cofibrations de Thomason de $\Cat$ et dont
les fibrations sont les foncteurs $u$ tels que $\Ex^2 N(u)$ est une
fibration de la structure de catégorie de modèles sur $\pref{\Delta}$
associée au $\Delta$-localisateur~$\W_\Delta$.
\end{thm}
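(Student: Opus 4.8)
Le plan est d'appliquer le lemme~\ref{lemme:transfert_loc} à l'équivalence de Quillen
\[ c\,\Sd^2 : \pref{\Delta} \rightleftarrows \Cat : \Ex^2 N, \]
où $\pref{\Delta}$ est munie de la structure de Kan-Quillen et $\Cat$ de la structure à la Thomason « classique » (celle dont les équivalences faibles sont les foncteurs dont le nerf est une équivalence d'homotopie faible simpliciale), en prenant pour $\M'$ la structure de catégorie de modèles sur $\pref{\Delta}$ associée au $\Delta$-localisateur $\W_\Delta$, pour $I'$ l'ensemble $I$ du paragraphe~\ref{paragr:KanQuillen} et pour $J'$ un ensemble de générateurs des cofibrations triviales de $\M'$. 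Cette structure $\M'$ existe puisque $\W_\Delta$ est accessible (théorème~\ref{thm:bij_Cisinski} joint à l'accessibilité de $\W$) ; elle est combinatoire (théorème~\ref{thm:loc_Cisinski}), donc à engendrement cofibrant, de même catégorie sous-jacente $\pref{\Delta}$ que $\M$, de mêmes cofibrations (les monomorphismes), et l'on a $\W_\M \subset \W_{\M'}$, les équivalences d'homotopie faibles simpliciales appartenant à tout $\Delta$-localisateur.

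Restent à vérifier les quatre hypothèses de~\ref{lemme:transfert_loc}. Les hypothèses~\emph{a)} et~\emph{c)} — à savoir que $\Ex^2 N$ envoie les équivalences faibles de la structure de Thomason classique sur des équivalences d'homotopie faibles simpliciales, et que cette structure est propre à gauche — sont les analogues $1$-catégoriques respectifs du corollaire~\ref{coro:inclus_Wdoo} et du théorème~\ref{thm:AraMaltsi}, dus à Thomason et exposés dans le livre de Cisinski~\cite{Cisinski}, de même que le fait, utilisé ici, que l'adjonction ci-dessus est bien une équivalence de Quillen (analogue du théorème~\ref{thm:AraChicheMaltsi}). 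L'hypothèse~\emph{b)} tient car tout objet de $\pref{\Delta}$ est cofibrant pour la structure de Kan-Quillen. Enfin l'hypothèse~\emph{d)} résulte de l'égalité $G^{-1}(\W_{\M'}) = (\Ex^2 N)^{-1}(\W_\Delta) = \W$ (caractérisation de $\W_\Delta$ rappelée en tête de section) et de la stabilité de tout localisateur fondamental de $\Cat$ par limite inductive filtrante (proposition~2.4.12 de~\cite{Maltsi}).

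Le lemme~\ref{lemme:transfert_loc} fournit alors sur $\Cat$ une structure de catégorie de modèles propre à gauche, engendrée par $c\,\Sd^2(I)$ et $c\,\Sd^2(J')$, dont les équivalences faibles sont $G^{-1}(\W_{\M'}) = \W$ et dont les fibrations sont les foncteurs $u$ tels que $\Ex^2 N(u)$ soit une fibration de $\M'$ ; comme $lr(c\,\Sd^2(I))$ est par définition la classe des cofibrations de Thomason de $\Cat$, c'est exactement l'énoncé voulu. (En prime, le lemme donne que $(c\,\Sd^2, \Ex^2 N)$ est une équivalence de Quillen entre $\M'$ et cette structure, analogue du théorème~\ref{thm:eq_Quillen_dCat_W}.) De manière strictement équivalente, on pourrait aussi décalquer mot pour mot la preuve du théorème~\ref{thm:Thomason_W} en y substituant partout $\Cat$, $c$ et $N$ à $\dCat$, $c_2$ et $\dN$. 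Dans les deux cas, l'argument se réduisant à une transcription, le seul point réellement délicat est de disposer des énoncés classiques de Thomason sous la forme citable requise, et tout particulièrement de la \emph{propreté à gauche} de la structure à la Thomason classique sur $\Cat$, qui en est l'unique ingrédient substantiel.
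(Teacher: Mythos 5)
Votre démonstration est correcte, mais elle ne suit pas la voie retenue par l'article : la preuve « officielle » se contente de renvoyer à la démonstration du théorème 5.2.15 de \cite{Cisinski}, où la structure en question est précisément obtenue en appliquant le lemme de transfert à l'adjonction $c\,\Sd^2 : \pref{\Delta} \rightleftarrows \Cat : \Ex^2 N$, $\pref{\Delta}$ étant munie de la structure associée à $\W_\Delta$. Votre argument --- appliquer le lemme~\ref{lemme:transfert_loc} à l'équivalence de Quillen de Thomason entre la structure de Kan--Quillen et la structure à la Thomason « classique » sur $\Cat$, en prenant pour $\M'$ la structure associée à $\W_\Delta$ --- est exactement la variante que l'article signale lui-même dans la remarque qui suit le théorème~\ref{thm:Thom_Cat}. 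Votre vérification des quatre hypothèses est juste : l'identité des cofibrations et l'inclusion $\W_\M \subset \W_{\M'}$ viennent du théorème~\ref{thm:loc_Cisinski} et du fait que tout $\Delta$-localisateur contient $r(\Mono)$, donc (par saturation) les équivalences d'homotopie faibles ; la cofibrance des sources et buts de $J'$ est triviale dans $\pref{\Delta}$ ; l'égalité $G^{-1}(\W_{\M'}) = N^{-1}(\Ex^2)^{-1}(\W_\Delta) = \W$ est celle rappelée en tête de section ; et la stabilité par limite inductive filtrante est la proposition 2.4.12 de \cite{Maltsi}. Ce que chaque approche apporte : la citation de \cite{Cisinski} est plus courte et repose sur un résultat déjà publié dans la forme exacte voulue ; votre dérivation rend le texte plus autonome et isole clairement le seul ingrédient non formel, à savoir le paquet de résultats classiques de Thomason (équivalence de Quillen avec la structure de Kan--Quillen, propreté --- à gauche --- de la structure à la Thomason sur $\Cat$), que vous importez comme boîte noire. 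Sur ce dernier point, soyez précis dans la référence : la preuve originale de la propreté par Thomason comportait une lacune (la stabilité des morphismes de Dwyer par rétractes est fausse), comblée par Cisinski, de sorte que la bonne référence citable est bien \cite{Cisinski} plutôt que \cite{Thomason} seul.
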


\begin{proof}
Cela résulte de la preuve du théorème 5.2.15 de \cite{Cisinski}, la structure de
catégorie de modèles sur $\Cat$ en jeu y étant obtenue en appliquant le
lemme de transfert à l'adjonction \hbox{$c\,\Sd^2 : \pref{\Delta}
\rightleftarrows \Cat : \Ex^2N$}, où $\pref{\Delta}$ est munie de la
structure de catégorie de modèles associée au
$\Delta$-localisateur~$\W_\Delta$.
\end{proof}

\begin{rem}\label{rem:Thom_Cat}
Il résulte également de la preuve du théorème 5.2.15 de \cite{Cisinski} que
l'adjonction $c\,\Sd^2 : \pref{\Delta} \rightleftarrows \Cat : \Ex^2N$ est une
équivalence de Quillen, où $\pref{\Delta}$ est munie de la structure de
catégorie de modèles associée à $\W_\Delta$. 
\end{rem}

\begin{rem}
Le théorème~\ref{thm:Thom_Cat} et la remarque~\ref{rem:Thom_Cat} résultent
également du lemme~\ref{lemme:transfert_loc} appliqué à l'équivalence de
Quillen définie par Thomason dans \cite{Thomason} et à la structure de
catégorie de modèles associée au $\Delta$-localisateur $\W_\Delta$ (en
utilisant la bijection de Cisinski du théorème \ref{thm:bij_Cisinski}).
\end{rem}

On appellera la structure de catégorie de modèles sur $\Cat$ donnée par le
théorème précédent la \ndef{structure de catégorie de modèles à la Thomason
associée à $\W$}.

\begin{thm}\label{thm:eq_Cat_dCat}
Soit $\W$ un localisateur fondamental de $\dCat$ accessible. Alors
l'adjonction
\[ \tau : \dCat \rightleftarrows \Cat : \iota, \]
où $\tau$ désigne l'adjoint à gauche du foncteur $\iota$,
est une équivalence de Quillen, où $\dCat$ (resp. $\Cat$) est munie de la
structure de catégorie de modèles à la Thomason associée à $\W$ (resp. à $\W
\cap \Cat$).
\end{thm}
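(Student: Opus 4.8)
The strategy is to factor the adjunction $\tau : \dCat \rightleftarrows \Cat : \iota$ through the category of simplicial sets equipped with the model structure associated to $\W_\Delta$, using the two Quillen equivalences we already have. More precisely, by the remark following Theorem~\ref{thm:Thomason_W} (resp. Theorem~\ref{thm:Thom_Cat}), the Thomason-style structure on $\dCat$ associated to $\W$ (resp. on $\Cat$ associated to $\W \cap \Cat$) comes with a Quillen equivalence $c_2\Sd^2 : \pref{\Delta} \rightleftarrows \dCat : \Ex^2\dN$ by Theorem~\ref{thm:eq_Quillen_dCat_W} (resp. $c\,\Sd^2 : \pref{\Delta} \rightleftarrows \Cat : \Ex^2 N$ by Remark~\ref{rem:Thom_Cat}); note that here $\pref{\Delta}$ carries the \emph{same} model structure in both cases, namely the one associated to $\W_\Delta$, since $(\W \cap \Cat)_\Delta = \W_\Delta$ by the paragraph opening this section. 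So we have a commutative (up to natural isomorphism, at the level of left adjoints) triangle of left Quillen functors with two sides known to be Quillen equivalences.

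First I would check that the triangle of left adjoints actually commutes up to natural isomorphism: we need $\tau\, c_2 \cong c$, i.e. that the $2$-truncation of the $2$-category $c_2(X)$ attached to a simplicial set $X$ is the category $c(X)$. This is a purely categorical statement about left adjoints and should follow by comparing the two composite adjunctions with $\pref{\Delta}$: indeed $\iota$ composed with $N_2$ is $N$ (the geometric nerve of a $1$-category is its ordinary nerve, a standard fact, cf.\ the definition of $\widetilde{\Delta_n}$ in paragraph~\ref{paragr:def_N2} which reduces to $\Delta_n$ when the $2$-category is a $1$-category), hence $N \cong N_2\,\iota$ implies by adjunction that the left adjoints satisfy $\tau\, c_2 \cong c$, and then precomposing with $\Sd^2$ gives $\tau\,(c_2\Sd^2) \cong c\,\Sd^2$.

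Next I would verify that $(\tau, \iota)$ is a Quillen adjunction for these structures. Since the model structures on $\dCat$ and $\Cat$ are cofibrantly generated by $c_2\Sd^2(I \cup J)$ and $c\,\Sd^2(I \cup J)$ respectively (the same $I$ and $J$ in both cases, by the remarks cited above), it suffices to show that $\tau$ sends the generating (trivial) cofibrations of $\dCat$ to (trivial) cofibrations of $\Cat$; but $\tau$ applied to $c_2\Sd^2(-)$ is $c\,\Sd^2(-)$ by the previous step, so $\tau$ sends the generators to the generators, hence sends cofibrations to cofibrations and trivial cofibrations to trivial cofibrations. (Equivalently, one checks $\iota$ preserves fibrations and trivial fibrations: a fibration of $\dCat$ is a $u$ with $\Ex^2\dN(u)$ a fibration downstairs, and $\dN\,\iota = N$, so $\iota(u)$ is a fibration of $\Cat$ exactly when $\Ex^2 N(u)$ is, which is the definition; similarly for trivial fibrations, where one uses that $\W = \dN^{-1}(\Ex^2)^{-1}(\W_\Delta)$ restricts correctly.)

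Finally, to upgrade the Quillen adjunction to a Quillen equivalence, I would invoke the standard two-out-of-three principle for Quillen equivalences in a commutative triangle: given left Quillen functors $f : \mathcal{A} \to \mathcal{B}$, $g : \mathcal{B} \to \mathcal{C}$ with $g f$ also left Quillen, if two of $f$, $g$, $gf$ are Quillen equivalences then so is the third. Here the triangle of \emph{left} adjoints is $\pref{\Delta} \xrightarrow{c_2\Sd^2} \dCat \xrightarrow{\tau} \Cat$ with composite $c\,\Sd^2 : \pref{\Delta} \to \Cat$; the first leg and the composite are Quillen equivalences, so $\tau$ is one too. The main obstacle I anticipate is not conceptual but a matter of correctly pinning down the naturality of the isomorphism $\tau\, c_2 \cong c$ and making sure the $2$-out-of-$3$ argument is applied to derived functors on the correct homotopy categories (all three carry a homotopy category equivalent to $\Ho(\pref{\Delta})$ localized at $\W_\Delta$, by the two established Quillen equivalences, so the comparison maps are forced to be equivalences)—everything else reduces to bookkeeping with the explicit descriptions of cofibrations and fibrations already recorded in Theorems~\ref{thm:Thomason_W} and~\ref{thm:Thom_Cat}.
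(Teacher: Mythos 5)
Your argument is correct, but it takes a genuinely different route from the paper's at the decisive step. Both proofs establish the Quillen adjunction in the same way: from the isomorphism $\dN\,\iota \simeq N$ and the description of the weak equivalences and fibrations of the two Thomason structures as the preimages under $\Ex^2\dN$, resp. $\Ex^2 N$, of those of the structure on $\pref{\Delta}$ associated with $\W_\Delta$ (using $(\W\cap\Cat)_\Delta = \W_\Delta$), one sees that $\iota$ preserves fibrations and weak equivalences. Where you diverge is in upgrading to a Quillen equivalence: the paper observes that, since $\iota$ preserves weak equivalences, it suffices that $\iota$ induce an equivalence of homotopy categories, and this is exactly the content of théorème~6.33 of Chiche --- a purely localization-theoretic statement requiring no model structures. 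You instead close the triangle over $(\pref{\Delta},\W_\Delta)$ via the natural isomorphism $\tau\, c_2 \cong c$ (correctly deduced from $\dN\,\iota \simeq N$ by uniqueness of left adjoints) and apply the two-out-of-three property for Quillen equivalences, using Theorem~\ref{thm:eq_Quillen_dCat_W} and Remark~\ref{rem:Thom_Cat}. This is precisely the alternative the paper itself records in the remark following the theorem (the « fonctorielle » version of Lemma~\ref{lemme:transfert_loc} applied to the triangle of Quillen equivalences), so it is sound. What each approach buys: the paper's is shorter and does not need the comparison of the $\Cat$-level Thomason structure with $\pref{\Delta}$; yours avoids invoking Chiche's equivalence of localized categories, recovering it as a byproduct of the two established Quillen equivalences, at the modest cost of verifying $\tau\, c_2\Sd^2 \cong c\,\Sd^2$ and the (standard, but worth citing) two-out-of-three principle for Quillen equivalences applied at the level of derived functors.
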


\begin{proof}
Les équivalences faibles et les fibrations de ces deux structures sont
précisément les morphismes s'envoyant, \forlang{via} les foncteurs $\Ex^2
\dN$ et $\Ex^2 N$ respectivement, sur des équivalences faibles et des
fibrations de la structure de catégorie de modèles associée
au $\Delta$\nbd-localisateur~$\W_\Delta$ (voir les théorèmes
\ref{thm:Thomason_W} et~\ref{thm:Thom_Cat} pour les fibrations).
Il résulte ainsi immédiatement de l'isomorphisme $\dN\iota \simeq N$ que le
foncteur~$\iota$ préserve les équivalences faibles et les fibrations, et
donc que le couple $(\tau, \iota)$ forme une adjonction de Quillen.

Puisque le foncteur $\iota$ préserve les équivalences faibles, pour
conclure, il suffit de voir que $\iota$~induit une équivalence sur les
catégories homotopiques. Cela résulte du
théorème~6.33 de \jcite.
\end{proof}

\begin{rem}
On pourrait également déduire le théorème précédent d'une version
« fonctorielle » du lemme~\ref{lemme:transfert_loc} qu'on appliquerait au
triangle d'équivalences de Quillen
\[
\xymatrix@C=1pc@R=1pc{
  & \Delta \ar[dl]_{c\,\Sd^2} \ar[dr]^{c_2\Sd^2} & \\
  \dCat \ar[rr]_\tau & & \Cat \pbox{,}
}
\]
où $\Delta$ (resp.~$\Cat$, resp.~$\dCat$) est munie de la structure de
catégorie de modèles de Kan-Quillen (resp. de Thomason \cite{Thomason},
resp.~du théorème \ref{thm:AraMaltsi}), et à la structure de catégorie de
modèles associée au $\Delta$-localisateur~$\W_\Delta$.
\end{rem}

\section{Propreté à droite}

\begin{defi}
Soit $\clC$ une classe de petites catégories (resp. de petites
$2$\nbd-catégories). On appellera \ndef{localisateur fondamental de $\Cat$}
(resp. \ndef{de $\dCat$}) \ndef{engendré par $\clC$} le localisateur
fondamental engendré par la classe de flèches $\{ C \to e \mid C \in \clC
\}$, où $e$ désigne la catégorie finale.
\end{defi}

\begin{thm}[Cisinski]\label{thm:Cis_propre}
Soit $\W$ un localisateur fondamental de $\Cat$ accessible. Les conditions
suivantes sont équivalentes : 
\begin{enumerate}
\item la structure de catégorie de modèles à la Thomason sur $\Cat$ associée
  à $\W$ est propre ;
\item\label{item:def_propre} la structure de catégorie de modèles sur
  $\pref{\Delta}$ associée à $\W_\Delta$ est propre ;
\item $\W$ est engendré par un \emph{ensemble} de catégories.
\end{enumerate}
\end{thm}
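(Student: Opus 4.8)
Le plan est de se ramener à la propreté à droite, puis de démontrer séparément l'équivalence entre \emph{a)} et~\ref{item:def_propre}, de nature catégorique, et l'équivalence entre \ref{item:def_propre} et~\emph{c)}, de nature combinatoire, la partie réellement difficile étant un théorème de Cisinski qu'on se contentera de citer. On observe d'abord que les deux structures de catégorie de modèles en jeu sont automatiquement propres à gauche : la structure à la Thomason sur $\Cat$ associée à $\W$ l'est en vertu du théorème~\ref{thm:Thom_Cat}, et la structure sur $\pref{\Delta}$ associée à $\W_\Delta$ l'est car ses cofibrations sont les monomorphismes, tous ses objets étant donc cofibrants. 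Ainsi, dans les conditions \emph{a)} et~\ref{item:def_propre}, le mot « propre » peut être remplacé par « propre à droite ».

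Pour l'équivalence entre \emph{a)} et~\ref{item:def_propre}, on utilise l'équivalence de Quillen $c\,\Sd^2 : \pref{\Delta} \rightleftarrows \Cat : \Ex^2 N$ de la remarque~\ref{rem:Thom_Cat}, qui relie précisément ces deux structures, et dont les deux adjoints respectent les équivalences faibles : l'adjoint à gauche par l'argument de Ken Brown, tous les objets de $\pref{\Delta}$ étant cofibrants, et l'adjoint à droite $\Ex^2 N$ parce que $N$ envoie par construction les $\W$-équivalences dans $\W_\Delta$ et que $\beta : \id{\pref{\Delta}} \to \Ex$ est une équivalence d'homotopie faible simpliciale argument par argument (paragraphe~\ref{paragr:Sd_Ex}) ; de plus, $\Ex^2 N$ reflète alors les équivalences faibles, les $\W$-équivalences étant exactement les flèches qu'il envoie dans $\W_\Delta$ (théorème~\ref{thm:Thom_Cat} et caractérisation de $\W_\Delta$ rappelée au début de la section précédente). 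L'implication \ref{item:def_propre}~$\Rightarrow$~\emph{a)} en découle aussitôt : étant donné un carré cartésien de $\Cat$ dont une arête est une fibration et une autre une équivalence faible, on lui applique $\Ex^2 N$, lequel préserve les fibrations (comme tout foncteur de Quillen à droite), les produits fibrés (comme tout adjoint à droite) et les équivalences faibles ; la propreté à droite de la structure sur $\pref{\Delta}$ associée à $\W_\Delta$ assure que l'image de l'arête restante est une équivalence faible, d'où il suit, ce foncteur reflétant les équivalences faibles, que cette arête en est une. L'implication réciproque n'est \emph{pas} formelle, $c\,\Sd^2$ ne préservant ni les produits fibrés ni les fibrations ; c'est là, avec l'une des implications ci-dessous, que réside la principale difficulté. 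On la contournerait en établissant directement l'équivalence entre \emph{a)} et~\emph{c)} à partir du traitement de la structure à la Thomason sur $\Cat$ et de sa propreté dans~\cite{Cisinski} (voir notamment la preuve du théorème~5.2.15 de~\cite{Cisinski}), via la description de $\W$ en fonction de ses générateurs fournie par le théorème~\ref{thm:bij_Cisinski}.

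Pour l'équivalence entre \ref{item:def_propre} et~\emph{c)}, on traduit la condition \emph{c)} à travers la bijection du théorème~\ref{thm:bij_Cisinski}. En utilisant que cette bijection est donnée par les applications $\W \mapsto N^{-1}(\W)$ et $\W \mapsto i_\Delta^{-1}(\W)$ et que, pour toute petite catégorie $C$, le nerf du foncteur canonique $i_\Delta N(C) \to C$ est une équivalence d'homotopie faible simpliciale (voir la preuve du théorème~\ref{thm:bij_Cisinski}), on vérifie que $\W$ est engendré par un ensemble de catégories si et seulement si $\W_\Delta$ est engendré, en tant que $\Delta$-localisateur, par les équivalences d'homotopie faibles simpliciales et un \emph{ensemble} de flèches de but $\Delta_0$, c'est-à-dire si et seulement si la structure sur $\pref{\Delta}$ associée à $\W_\Delta$ est obtenue à partir de la structure de Kan-Quillen par localisation de Bousfield à gauche en un \emph{ensemble} de morphismes de but $\Delta_0$. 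Appelons une telle localisation une \emph{localisation par annulation}. L'équivalence recherchée devient alors l'énoncé suivant lequel, parmi les $\Delta$-localisateurs accessibles, ceux dont la structure de catégorie de modèles associée est propre à droite sont exactement ceux pour lesquels cette structure est une localisation par annulation. L'implication \emph{c)}~$\Rightarrow$~\ref{item:def_propre} en est la partie souple : les objets locaux d'une localisation par annulation forment une classe stable par produits fibrés homotopiques dans la structure de Kan-Quillen, et une localisation de Bousfield à gauche de la structure (propre) de Kan-Quillen ayant cette propriété est propre à droite. L'implication réciproque \ref{item:def_propre}~$\Rightarrow$~\emph{c)} — un $\Delta$-localisateur accessible dont la structure associée est propre à droite est nécessairement une localisation par annulation — est l'ingrédient réellement difficile ; c'est un théorème de Cisinski, qu'on se contente de citer ; voir~\cite{Cisinski}. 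En combinant ces deux équivalences avec la réduction à la propreté à droite, on obtient les trois équivalences de l'énoncé.
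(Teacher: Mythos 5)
La preuve du texte est entièrement une preuve par citation : Cisinski définit une notion de localisateur fondamental de $\Cat$ propre (définition 4.3.21 de \cite{Cisinski}), son théorème 4.3.24 joint à la proposition 1.5.13 de \cite{Maltsi} identifie cette notion à la condition \emph{b)}, et les équivalences avec \emph{a)} et \emph{c)} sont exactement les théorèmes 5.2.15 et 6.1.11 de \opcit. Votre démonstration suit une route différente et en partie réellement argumentée : la réduction à la propreté à droite est correcte, et votre preuve directe de \emph{b)} $\Rightarrow$ \emph{a)} — appliquer $\Ex^2 N$, qui préserve les produits fibrés (adjoint à droite), les fibrations (par définition même des fibrations de la structure à la Thomason), les équivalences faibles, et qui reflète ces dernières — est valable et plus instructive que la simple citation.

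Il reste néanmoins une lacune réelle : le cycle d'implications n'est pas fermé. Vous établissez \emph{b)} $\Rightarrow$ \emph{a)}, esquissez \emph{c)} $\Rightarrow$ \emph{b)} et renvoyez à Cisinski pour \emph{b)} $\Rightarrow$ \emph{c)}, mais aucune implication \emph{partant} de \emph{a)} n'est démontrée ni précisément citée : vous reconnaissez vous-même que \emph{a)} $\Rightarrow$ \emph{b)} n'est pas formelle (le foncteur $c\,\Sd^2$ ne préservant ni produits fibrés ni fibrations) et écrivez seulement qu'« on la contournerait » en établissant \emph{a)} $\Leftrightarrow$ \emph{c)} à partir de \cite{Cisinski}, sans le faire ni donner de référence exploitable. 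Telle quelle, votre preuve n'exclut donc pas que la structure sur $\Cat$ soit propre sans que celle sur $\pref{\Delta}$ le soit. Par ailleurs, dans \emph{c)} $\Rightarrow$ \emph{b)}, le passage de « les objets locaux d'une annulation sont stables par produits fibrés homotopiques » à « la localisation est propre à droite » n'est pas formel : le critère usuel porte sur la stabilité des équivalences locales par changement de base le long des fibrations, et cette étape devrait être soit détaillée, soit remplacée par la citation précise du théorème 6.1.11 de \cite{Cisinski} (ou du théorème de Bousfield sur les annulations). Pour un théorème attribué à Cisinski, le plus simple est de faire comme le texte : citer le théorème 6.1.11 de \cite{Cisinski} pour \emph{b)} $\Leftrightarrow$ \emph{c)} et le théorème 5.2.15 de \opcit pour \emph{a)} $\Leftrightarrow$ \emph{b)}, votre argument direct ne couvrant que le sens \emph{b)} $\Rightarrow$ \emph{a)}.
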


\begin{proof}
Dans \cite{Cisinski}, Cisinski définit une notion de localisateur
fondamental de $\Cat$ propre (définition 4.3.21). Il résulte du
théorème~4.3.24 de \opcit et de la proposition 1.5.13 de~\cite{Maltsi}
qu'un localisateur fondamental $\W$ de $\Cat$ est propre si et seulement
s'il satisfait à la condition~\ref{item:def_propre} ci-dessus. Les
équivalences avec les deux autres conditions résultent alors des
théorèmes~5.2.15 et~6.1.11 de \cite{Cisinski}.
\end{proof}

\begin{lemme}\label{lemme:eng_cat}
Soit $\W$ un localisateur fondamental de $\dCat$. Les conditions suivantes
sont équivalentes :
\begin{enumerate}
  \item $\W$ est engendré par une classe (resp. un ensemble) de petites $2$-catégories ;
  \item $\W$ est engendré par une classe (resp. un ensemble) de petites catégories ;
  \item le localisateur fondamental $\W \cap \Cat$ de $\Cat$ est engendré
    par une classe (resp. un ensemble) de petites catégories.
\end{enumerate}
\end{lemme}

\begin{proof}
Si $\W$ est engendré par une classe $\clC$ de petites $2$-catégories, alors,
en vertu de la proposition~6.47 de \jcite, le localisateur
fondamental $\W \cap \Cat$ de $\Cat$ est engendré par la classe de
foncteurs $\{ i_\Delta\dN(C) \to i_\Delta\dN(e) \mid C \in \clC \}$.
Puisque $i_\Delta\dN(e) \simeq \Delta$ admet un objet final, par définition
des localisateurs fondamentaux de~$\Cat$, le foncteur $i_\Delta\dN(e) \to e$
est dans $\W \cap \Cat$. Par deux sur trois, le localisateur fondamental $\W
\cap \Cat$ est donc engendré par la classe de petites catégories
\[ \{ i_\Delta\dN(C) \mid C \in \clC \}. \]
Par ailleurs, il résulte de la proposition~6.48 de \jcite que si le
localisateur fondamental~$\W \cap \Cat$ est engendré par une classe de
petites catégories, le localisateur fondamental $\W$ est également engendré
par cette classe de petites catégories, ce qui achève la démonstration.
\end{proof}

\begin{thm}
Soit $\W$ un localisateur fondamental de $\dCat$ accessible. Les conditions
suivantes sont équivalentes :
\begin{enumerate}
\item\label{item:2_propre} la structure de catégorie de modèles à la
  Thomason sur $\dCat$ associée à $\W$ est propre ;
\item\label{item:1_propre} la structure de catégorie de modèles à la Thomason sur $\Cat$ associée au
  localisateur fondamental $\W \cap \Cat$ de $\Cat$ est propre ;
\item\label{item:Delta_propre} la structure de catégorie de modèles sur
  $\pref{\Delta}$ associée au $\Delta$-localisateur $\W_\Delta$ est propre ;
\item $\W$ est engendré par un \emph{ensemble} de petites $2$-catégories ;
\item $\W$ est engendré par un \emph{ensemble} de petites catégories.
\end{enumerate}
\end{thm}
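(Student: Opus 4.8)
Le plan est de tout déduire de trois ingrédients : les équivalences de Quillen des théorèmes~\ref{thm:eq_Quillen_dCat_W} et~\ref{thm:eq_Cat_dCat}, la caractérisation de la propreté obtenue par Cisinski pour les localisateurs fondamentaux de~$\Cat$ (théorème~\ref{thm:Cis_propre}), et la traduction des conditions d'engendrement fournie par le lemme~\ref{lemme:eng_cat}. L'outil qui rend les transferts possibles est le fait général suivant, que j'isolerais en un lemme : \emph{si $F : \M \rightleftarrows \N : G$ est une équivalence de Quillen dont l'adjoint à droite~$G$ préserve toutes les équivalences faibles, et si $\M$ est propre à droite, alors $\N$ l'est aussi.} En effet, $G$ préservant toute équivalence faible induit un foncteur $\Ho(\N) \to \Ho(\M)$, qui est une équivalence de catégories puisque $(F, G)$ est une équivalence de Quillen ; ce foncteur est donc conservatif, et comme dans toute catégorie de modèles une flèche est une équivalence faible si et seulement si son image dans la catégorie homotopique est un isomorphisme (saturation de la classe des équivalences faibles, conséquence de la propriété du « deux sur six »), on en déduit que $G$ reflète les équivalences faibles. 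Pour le changement de base $P = X \times_Z Y \to Y$ d'une équivalence faible $X \to Z$ le long d'une fibration $Y \to Z$ de~$\N$, le foncteur~$G$ (qui préserve les produits fibrés et les fibrations) le transforme en le changement de base $G(P) \to G(Y)$ de l'équivalence faible $G(X) \to G(Z)$ le long de la fibration $G(Y) \to G(Z)$ ; la propreté à droite de~$\M$ fait de ce dernier une équivalence faible, donc $P \to Y$ en est une aussi.

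Je remarquerais ensuite que $\W \cap \Cat$ est un localisateur fondamental de $\Cat$ accessible (la trijection de Chiche-Cisinski préservant l'accessibilité, voir le paragraphe~\ref{paragr:trijection}) et que $(\W \cap \Cat)_\Delta = \W_\Delta$, de sorte que les théorèmes~\ref{thm:Cis_propre} et~\ref{thm:Thom_Cat} s'appliquent à $\W \cap \Cat$. Le théorème~\ref{thm:Cis_propre} appliqué à $\W \cap \Cat$ donne alors l'équivalence des conditions~\ref{item:1_propre} et~\ref{item:Delta_propre} de l'énoncé avec la condition « $\W \cap \Cat$ est engendré par un ensemble de petites catégories », et le lemme~\ref{lemme:eng_cat} identifie cette dernière condition avec les deux dernières conditions de l'énoncé (engendrement de~$\W$ par un ensemble de petites $2$-catégories, resp. de petites catégories). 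Ainsi les conditions~\ref{item:1_propre}, \ref{item:Delta_propre} et ces deux dernières sont déjà équivalentes, et il ne reste qu'à y rattacher la condition~\ref{item:2_propre}.

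Pour cela, j'établirais les deux implications \ref{item:Delta_propre}~$\Rightarrow$~\ref{item:2_propre} et \ref{item:2_propre}~$\Rightarrow$~\ref{item:1_propre}, ce qui, joint aux équivalences ci-dessus, achève la démonstration. Pour \ref{item:Delta_propre}~$\Rightarrow$~\ref{item:2_propre} : si la structure sur $\pref{\Delta}$ associée à~$\W_\Delta$ est propre, elle est en particulier propre à droite, donc le lemme de transfert appliqué à l'équivalence de Quillen $c_2\Sd^2 : \pref{\Delta} \rightleftarrows \dCat : \Ex^2\dN$ du théorème~\ref{thm:eq_Quillen_dCat_W} --- dont l'adjoint à droite $\Ex^2\dN$ préserve les équivalences faibles --- montre que la structure à la Thomason sur $\dCat$ associée à~$\W$ est propre à droite ; cette structure étant toujours propre à gauche (théorème~\ref{thm:Thomason_W}), elle est propre. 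L'implication \ref{item:2_propre}~$\Rightarrow$~\ref{item:1_propre} est le même argument appliqué à l'équivalence de Quillen $\tau : \dCat \rightleftarrows \Cat : \iota$ du théorème~\ref{thm:eq_Cat_dCat}, dont l'adjoint à droite~$\iota$ préserve les équivalences faibles : il en résulte que la structure à la Thomason sur $\Cat$ associée à $\W \cap \Cat$ est propre à droite, donc propre puisqu'elle est propre à gauche (théorème~\ref{thm:Thom_Cat}).

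L'étape qui me paraît la plus délicate est le lemme de transfert, et plus précisément l'affirmation que l'adjoint à droite d'une équivalence de Quillen préservant toutes les équivalences faibles les reflète également : cela repose sur la saturation de la classe des équivalences faibles dans une catégorie de modèles ; tout le reste n'est qu'un assemblage de résultats déjà établis. Il faut par ailleurs prendre garde que « propre » dans les conditions~\ref{item:2_propre}, \ref{item:1_propre} et~\ref{item:Delta_propre} signifie propre des deux côtés, ce qui explique pourquoi la propreté à gauche, inconditionnelle, des structures à la Thomason sur $\dCat$ et sur $\Cat$ est invoquée à chaque application du lemme de transfert.
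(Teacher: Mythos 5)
Votre démonstration est correcte et suit essentiellement la même stratégie que celle du texte : équivalence des quatre dernières conditions via le théorème~\ref{thm:Cis_propre} et le lemme~\ref{lemme:eng_cat}, puis rattachement de la condition~\ref{item:2_propre} par transfert de la propreté à droite le long des foncteurs de Quillen à droite $\iota$ et $\Ex^2\dN$, qui préservent et reflètent les équivalences faibles. Seule différence mineure : vous déduisez la réflexion des équivalences faibles de la saturation et de l'équivalence de Quillen, alors qu'elle est ici immédiate par définition, les équivalences faibles des structures sur $\Cat$ et $\dCat$ étant précisément les images réciproques de celles de la structure sur $\pref{\Delta}$ associée à $\W_\Delta$.
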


\begin{proof}
L'équivalence entre les quatre dernières conditions résulte du théorème 
\ref{thm:Cis_propre} et du lemme précédent. Les implications
$\ref{item:Delta_propre} \Rightarrow \ref{item:2_propre} \Rightarrow
\ref{item:1_propre}$ résultent du fait que les foncteurs
\[ \Cat \xrightarrow[\hphantom{\Ex^2\dN}]{\iota} \dCat \xrightarrow{\Ex^2\dN} \pref{\Delta} \]
sont des foncteurs de Quillen à droite (voir les théorèmes
\ref{thm:eq_Cat_dCat} et \ref{thm:eq_Quillen_dCat_W}) qui préservent et
reflètent les équivalences faibles.
\end{proof}

\bibliography{biblio}
\bibliographystyle{mysmfplain}

\end{document}